\newtheorem{thm}{Theorem}[section]
\newtheorem{defini}{Definition}[section]
\newtheorem{prop}{Proposition}[section]
\newtheorem{coro}{Corollary}[section]
\begin{document}
	\title[ ]{ Semi-Fredholm and Semi-Browder Spectra For $C_0$-quasi-semigroups  }
	
	\author[ Abdelaziz Tajmouati, Youness Zahouan 
	\\]
	{ A. Tajmouati \; , \; Y. Zahouan}
	\address{A. Tajmouati, Y. Zahouan \newline
		Sidi Mohamed Ben Abdellah
		Univeristy,
		Faculty of Sciences Dhar Al Mahraz, Fez,  Morocco.}
	\email{abdelaziz.tajmouati@usmba.ac.ma}
	\email{zahouanyouness1@gmail.com}
	\maketitle

\begin{center}
	\textbf{Abstract}
\end{center}
		In this paper, we describe the different spectra of the $C_0$-quasi-semigroups by the spectra of their generators. Specially, essential ascent and descent,Drazin invertible, upper and lower semi-Fredholm and semi-Browder spectra.

\textbf{keywords}:
$C_0$-quasi-semigroup,$C_0$-semigroup ,semi-Fredholm , ascent ,  descent spectrum ,  Drazin spectrum , ,semi-Browder spectrum.

	\section{\textbf{Introduction}}
	Let $X$ be a complex Banach space and $\mathcal{B}(X)$ the algebra of all bounded linear operators on  $X$. We denote by  $Rg(T)$, $N(T)$, $\rho(T)$ and $\sigma(T),$  respectively  the range, the kernel, the resolvent and the spectrum of $T$, where
	$\sigma(T)=\{\lambda\in\mathbb{C} \,/ \, \lambda-T \,\mbox{is not bijective}\}$.
	 The function resolvent of $T\in\mathcal{B}(X)$ is defined for all $\lambda\in\rho(T)$ by $\mathcal{R}(\lambda,T)=(\lambda -T)^{-1}.$ The ascent and descent of an operator $T$ are defined respectively by,
	 \begin{center}
	 		$a(T)=\min\{k\in\mathbb{N}\,/ \,N(T^k)=N(T^{k+1})\} \; ; \;  d(T)=\min\{k\in\mathbb{N}\,/\, Rg(T^k)=Rg(T^{k+1})\}$.
	 \end{center}
	   with the convention $inf(\varnothing)=\infty$.\\
	The essential ascent and descent of an operator $T$ are defined respectively by, \\
	$a_e(T)=\min\{k\in\mathbb{N}\,/ \,dim[N(T^{k+1})/N(T^k)]< \infty\}$ ; 	$d_e(T)=\min\{k\in\mathbb{N} / \, dim[Rg(T^k)/Rg(T^{k+1})]<\infty\}$.\\
	The ascent, descent , essential ascent and essential descent spectra are defined by,
	
	\begin{itemize}
		\item $\sigma_a(T)=\{\lambda\in\mathbb{C}\,/ \, a(\lambda-T)=\infty\}$ \; \;  ; \; 
		$\sigma_d(T)=\{\lambda\in\mathbb{C}\,/ \, d(\lambda-T)=\infty\}$.
		\item $\sigma_{a_e}(T)=\{\lambda\in\mathbb{C}\,/ \, a_e(\lambda-T)=\infty\}$ \; ; \;  $\sigma_{d_e}(T)=\{\lambda\in\mathbb{C}\,/\, d_e(\lambda-T)=\infty\}$.
	\end{itemize}
	
	We say that a closed linear operator $A$ is Drazin invertible if $  d(T) = a(T) = p < \infty$  and $Rg(A^{p})$ is closed. The Drazin spectrum is $$\sigma_D(T)=\{\lambda\in\mathbb{C}\,/ \, \lambda-T \; \mbox{si not  Drazin invertible} \}$$

	The sets of upper and lower semi-Fredholm and their spectra are defined respectively by,
	\begin{itemize}
	\item $\Phi_+(X)=\{T\in\mathcal{B}(X)\,/ \, \alpha(T):=dim(N(T))<\infty \;\mbox{and}\;Rg(T) \,\mbox{is closed}\}$\\
	$\sigma_{e_+}(T)=\{\lambda\in\mathbb{C}\,/ \,\lambda-T\notin \Phi_+(X)\}$ \\
	
	\item $\Phi_-(X)=\{T\in\mathcal{B}(X)\,/ \, \beta(T):=codim(Rg(T))=dim(X \backslash Rg(T))<+\infty\}$\\
$\sigma_{e_-}(T)=\{\lambda\in\mathbb{C} / \lambda-T\notin \Phi_-(X)\}.$
	\end{itemize}
	An operator $T\in \mathcal{B}(X)$ is called semi-Fredholm, in symbol $T\in \Phi_\pm(X)$, if $T\in \Phi_+(X)\cup\Phi_-(X).$\\
	An operator $T\in \mathcal{B}(X)$ is called Fredholm, in symbol $T\in \Phi(X)$, if $T\in \Phi_+(X)\cap\Phi_-(X)$.\\
	The essential and semi-Fredholm spectra are defined by,
	
	\begin{center}
		\begin{itemize}
		\item 	$\sigma_e(T)=\{\lambda\in\mathbb{C} / \lambda-T\notin \Phi(X)\}$
		\item $\sigma_{e_\pm}(T)=\{\lambda\in\mathbb{C} / \lambda-T\notin \Phi_\pm(X)\}$
			
		\end{itemize}
	\end{center}
	
	The sets of upper and lower semi-Browder and their spectra are defined respectively by,
		\begin{itemize}
		\item ${Br}_+(X)=\{T\in\Phi_+(X)\,/ \, a(T)<+\infty\}$ ; 
		$\sigma_{{Br}_+}(T)=\{\lambda\in\mathbb{C} / \lambda-T\notin Br_+(X)\}$
		\item ${Br}_-(X)=\{T\in\Phi_-(X)\,/ \, d(T)<+\infty\}$ ; 
		$\sigma_{Br_-}(T)=\{\lambda\in\mathbb{C} / \lambda-T\notin Br_-(X)\}$
		
	\end{itemize}
	
	An operator $T\in \mathcal{B}(X)$ is called semi-Browder, in symbol $T\in Br_\pm(X)$, if	$T\in {Br}_+(X)\cup {Br}_-(X)$.\\
	An operator $T\in \mathcal{B}(X)$ is called Browder, in symbol $T\in Br(X)$, if
	$T\in Br_+(X)\cap Br_-(X)$.\\
	The semi-Browder and Browder spectra are defined by,
	
	\begin{itemize}
		\item $\sigma_{{Br}_{\pm}}(T)=\{\lambda\in\mathbb{C} / \lambda-T\notin {Br}_\pm(X)\}$.
		\item $\sigma_{Br}(T)=\{\lambda\in\mathbb{C} /  \lambda-T\notin Br(X)\}$.
		
	\end{itemize}

	The theory of quasi-semigroups of  bounded linear operators, as a generalization of semigroups of operators, was introduced by Leiva and Barcenas \cite{r.3} , \cite{r.4} ,\cite{r.5}.Recently Sutrima , Ch. Rini Indrati and others \cite{r.9} , there are show some relations between a $C_{0}$-quasi-semigroup and its  generator  related  to  the  time-dependent evolution equation.
	
	\begin{defini}\cite{r.3}
	Let $X$ be a complex Banach. The family $\left\lbrace R(t,s)\right\rbrace _{t,s\geq 0} \subseteq \mathcal{B}(X)$   is called  a  strongly  continuous  quasi-semigroup   (or   $C_{0}$-quasi-semigroup)  of  operators  if  for  every  $t,s,r\geq 0$  and   $x \in X$ , 
	\begin{enumerate}
		\item $R(t,0)= I $, the identity operator on  $X$,
		\item $R(t,s+r)=R(t+r,s)R(t,r)$,
		\item $lim_{s\longrightarrow 0}\left| \left|R(t,s)x - x \right| \right| = 0$ , 
		\item There exists a continuous increasing mapping $ M : [ 0 ; +\infty [ \longrightarrow [ 1 ; +\infty [ $ such that ,
		\begin{center}
			$\left| \left|R(t,s)\right| \right| \leq  M(t+s)$
		\end{center}
		
	\end{enumerate}

	\end{defini}
	
	\begin{defini}\cite{r.3}
		For a  $C_{0}$-quasi-semigroup  $\left\lbrace R(t,s)\right\rbrace _{t,s\geq 0}$ on a Banach space   $X$, let $\mathcal{D}$ be the set of all   $x \in X$    for which 
		the following limits exist,
		\begin{center}
			$ \lim_{s\rightarrow 0^+}\frac{R(0,s)x-x}{s}$ \; \; and  \; \; $ \lim_{s\rightarrow 0^+}\frac{R(t,s)x-x}{s} =  \lim_{s\rightarrow 0^+}\frac{R(t-s,s)x-x}{s}$ , \; $ t> 0 $
		\end{center}
		
			\begin{itemize}
		\item For  $t\geq 0$    we define an operator $A(t)$  on $\mathcal{D}$ as  $A(t)x = \lim_{s\rightarrow 0^+}\frac{R(t,s)x-x}{s}$
		
		The family  $\left\lbrace A(t)\right\rbrace _{t\geq 0}$ is called infinitesimal generator of the $C_{0}$-quasi-semigroups $\left\lbrace R(t,s)\right\rbrace _{t,s\geq 0}$.
		
     	\item For  each   $t\geq 0$  we  define  the resolvent operator of  $A(t)$  as  
	$ \mathcal{R}(\lambda,A(t))=  (\lambda I - A(t))^{-1}$, with its resolvent set $\rho(A(t))$.
		\end{itemize}
\end{defini}

		Throughout this paper we  denote 
		$T(t)$  and  $R(t,s)$ as $C_{0}$-semigroups  $\left\lbrace T(t)\right\rbrace _{t\geq 0}$ and  $C_{0}$-quasi-semigroup  $\left\lbrace R(t,s)\right\rbrace _{t,s\geq 0}$  respectively.We also denote $\mathcal{D}$ as domain for  $A(t)$ , $t\geq 0$.\\

			\begin{thm}\label{l0}\cite{r.9}
			
			Let $R(t,s)$ is a $C_0$-quasi-semigroup on  $X$ with  generator  $A(t)$ then,
			
			\begin{enumerate}
				\item For each   $t\geq 0$  ,  $R(t,.)$ is strongly continuous on  $ [ 0 ; +\infty [ $.
				\item For each   $t\geq 0$ and  $x\in X$,
				\begin{center}
					$	\lim_{s\rightarrow 0^+} \frac{1}{s}\int_{0}^{s}R(t,h) x dh = x$
				\end{center}
				\item If $x \in \mathcal{D} $ ,  $t\geq 0$ and $t_{0},s_{0}\geq 0$ then,  $R(t_{0},s_{0})x \in \mathcal{D} $ and 
				\begin{center}
					$R(t_{0},s_{0})A(t)x = A(t)R(t_{0},s_{0})x $
				\end{center}
				\item For each $s > 0$ , $\frac{\partial}{\partial s} (R(t,s)x)= A(t+s)R(t,s)x = R(t,s) A(t+s)x $;  $ x \in \mathcal{D}. $
				\item If  $A(.)$ is locally integrable, then for every $x \in \mathcal{D} $ and   $s \geq 0$,
				
				\begin{center}
					$	R(t,s)x = x + \int_{0}^{s} A(t+h)R(t,h)x dh .  $
				\end{center}
				\item If $ f : [ 0 ; +\infty [ \longrightarrow X$  is a continuous, then for every $t \in [ 0 ; +\infty [$  
				
				\begin{center}
					$	\lim_{r\rightarrow 0^+} \int_{s}^{s+r}R(t,h) f(h)x dh = R(t,s)f(s)$
				\end{center}
			\end{enumerate}
				
		\end{thm}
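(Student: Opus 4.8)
The plan is to derive all six assertions from the single algebraic identity (2), the strong continuity at the origin (3), and the uniform bound $\|R(t,s)\|\le M(t+s)$. The key observation is that identity (2) is symmetric in its two increment variables, so besides $R(t,s+r)=R(t+r,s)R(t,r)$ one also has the reversed grouping $R(t,s+r)=R(t+s,r)R(t,s)$, which isolates the increment on the \emph{left} and is exactly what is needed to reduce continuity and differentiability in $s$ to the behaviour near $0$ governed by axioms (3) and (4).

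For (1), right continuity at $s\ge 0$ is immediate: $R(t,s+r)x-R(t,s)x=[R(t+s,r)-I]R(t,s)x\to 0$ as $r\to 0^+$ by axiom (3) applied to the fixed vector $R(t,s)x$. Left continuity at $s_0>0$ is the delicate point; writing $R(t,s_0)x-R(t,s_0-r)x=[R(t+s_0-r,r)-I]R(t,s_0-r)x$, I would control $[R(t+s_0-r,r)-I]$ acting on the bounded family $\{R(t,s_0-r)x\}$, where the first argument $t+s_0-r$ and the increment $r$ vary at once, using the bound $M$ on $[0,t+s_0]$ together with an $\varepsilon/3$ comparison of $R(t,s_0-r)x$ with the already right-continuous target $R(t,s_0)x$. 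Assertion (2) then follows from the Bochner estimate $\|\frac1s\int_0^s(R(t,h)x-x)\,dh\|\le\sup_{0\le h\le s}\|R(t,h)x-x\|$, which tends to $0$ by continuity of $h\mapsto R(t,h)x$ at $h=0$.

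The heart of the theorem is (3), and this is where I expect the main obstacle. Writing $A(t)R(t_0,s_0)x=\lim_{s\to 0^+}\frac1s[R(t,s)R(t_0,s_0)x-R(t_0,s_0)x]$, the commutation reduces entirely to the operator identity $R(t,s)R(t_0,s_0)=R(t_0,s_0)R(t,s)$: granting it, boundedness of $R(t_0,s_0)$ lets one pull it through the limit, yielding $R(t_0,s_0)A(t)x$ and simultaneously showing $R(t_0,s_0)x\in\mathcal{D}$. The difficulty is that axiom (2) only composes blocks sharing a first argument, so this commutation is \emph{not} formal; I would have to exploit the structural hypotheses on the generator family — the common domain $\mathcal{D}$ and the two-sided limit defining $A(t)$ — and this is the step I would scrutinise most carefully.

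Once (3) is in hand, the remaining parts are short. For (4), the right difference quotient $\frac1r[R(t,s+r)x-R(t,s)x]=\frac1r[R(t+s,r)-I]R(t,s)x\to A(t+s)R(t,s)x$, which is legitimate because (3) places $R(t,s)x$ in $\mathcal{D}$; the second equality $A(t+s)R(t,s)x=R(t,s)A(t+s)x$ is precisely (3) with indices $(t+s;t,s)$, and the left derivative is checked symmetrically. Assertion (5) is the integrated form of (4): local integrability of $A(\cdot)$ makes $h\mapsto A(t+h)R(t,h)x$ integrable, so the fundamental theorem of calculus for $X$-valued functions gives $R(t,s)x-x=\int_0^s A(t+h)R(t,h)x\,dh$. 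Finally (6) is a mean-value statement: the integrand $h\mapsto R(t,h)f(h)$ is continuous — split $R(t,h)f(h)-R(t,s)f(s)=R(t,h)[f(h)-f(s)]+[R(t,h)-R(t,s)]f(s)$ and use the bound $M$ together with (1) — so its average over $[s,s+r]$ converges to $R(t,s)f(s)$ as $r\to 0^+$.
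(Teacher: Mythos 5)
First, a point of comparison: the paper contains no proof of this theorem at all --- it is imported verbatim from \cite{r.9} with a citation --- so your attempt can only be judged on its own terms, and on those terms it has two genuine gaps. The first is left continuity in part (1). Your $\varepsilon/3$ plan estimates $\|[R(t+s_0-r,r)-I]R(t,s_0-r)x\|$ by comparing $R(t,s_0-r)x$ with the ``target'' $R(t,s_0)x$; but $\|R(t,s_0-r)x-R(t,s_0)x\|$ is precisely the quantity you are trying to prove small, and it reappears in your estimate multiplied by $\|R(t+s_0-r,r)-I\|\le M(t+s_0)+1$, which is not less than $1$, so it cannot be absorbed: the scheme is circular. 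The residual term $[R(t+s_0-r,r)-I]R(t,s_0)x$ is no better, because the first argument $t+s_0-r$ moves with $r$, and axiom (3) as stated in this paper is pointwise in $t$, hence gives no control over such a family. Every regrouping permitted by axiom (2) reintroduces one of these two defects --- for instance $R(t,s_0)x-R(t,s_0-r)x=R(t+r,s_0-r)[R(t,r)x-x]+[R(t+r,s_0-r)-R(t,s_0-r)]x$, where the second bracket is a perturbation of the first argument that the axioms never address. In the source \cite{r.9} the strong-continuity axiom is taken locally uniformly in $t$, and that uniformity is what a correct proof of (1) actually consumes; from the pointwise axiom alone your argument does not close.

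The second and more serious gap is part (3), which you yourself flag: you correctly reduce everything to the commutation $R(t,s)R(t_0,s_0)=R(t_0,s_0)R(t,s)$ --- granting it, boundedness of $R(t_0,s_0)$ indeed passes it through both defining limits and yields $R(t_0,s_0)x\in\mathcal{D}$ together with the stated identity --- and then you offer no argument for the commutation itself. That commutation \emph{is} the content of (3), and it is not formal: axiom (2) only composes blocks with matched endpoints, and the only alignment it provides for free is $t=t_0+s_0$, namely $R(t_0,s_0+s)=R(t_0+s_0,s)R(t_0,s_0)$, which produces exactly the differentiation identity of part (4) and nothing more. For a general evolution family $R(t,s)=U(t+s,t)$ the operators $R(t,s)$ and $R(t_0,s_0)$ need not commute, so whatever proof exists must exploit the specific definition of $\mathcal{D}$ (in particular the two-sided limit condition) as in \cite{r.9}; your sketch does not identify that mechanism. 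Since your proof of the second equality in (4) invokes (3), and your left-derivative check in (4) runs into the same moving-first-argument difficulty as in (1), parts (3) and (4) remain unproved; parts (2), (5) and (6) are essentially fine, but only modulo (1).
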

	In the semigroups theory, if $A$ is an infinitesimal generator of $C_0$-semigroup with domain $D(A)$  , then $A$  is a closed operator  and $D(A)$  is dense in $X$.  These are is not always true for any $C_0$-quasi-semigroups, see \cite{r.9}.\\

	In \cite{r.6},\cite{r.7},\cite{r.8},\cite{r.10},\cite{r.11}, the authors have studied the different spectra of the $C_0$-semigroups.
	In our paper \cite{r.12}, we have studied  ordinary , point , approximate point , residual , essential and regular spectra of the $C_0$-quasi-semigroups. In this paper, we continue to study $C_0$-quasi-semigroups. We investigate the relationships between the different spectra of the $C_0$-quasi-semigroups and their generators, precisely the essential ascent and descent, Drazin, upper and lower semi-Fredholm and semi-Browder spectra.
	
	
	\section{\textbf{Main results}}

	In \cite{r.12}  we have proved the following theorem,
	\begin{thm}\label{t1} 
	 Let $A(t) $ be the generator of the C$_{0}-$quasi-semigroup  $(R(t,s))_{t,s\geq 0}$ . Then for all $\lambda\in\mathbb{C}$ and all $t,s\geq 0$ we have
			\begin{enumerate}
				\item For all $x\in X$ we have $$(\lambda - A(t))D_\lambda(t,s)x=[e^{\lambda s} -R(t,s)]x .$$
				\item For all $x\in \mathcal{D}$ we have $$D_\lambda(t,s)(\lambda - A(t))x=[e^{\lambda s} - R(t,s)]x .$$
			\end{enumerate}
			With $D_\lambda(t,s)x = \int_0^s e^{\lambda(s-h)}R(t,h)xdh$ for all $x\in X$  and $t,s\geq 0$ is a bounded linear operator on $X$.
		
	\end{thm}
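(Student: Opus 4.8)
The plan is to treat $D_\lambda(t,s)$ as the quasi-semigroup analogue of the operator $\int_0^s e^{\lambda(s-h)}T(h)\,dh$ that produces the resolvent of a $C_0$-semigroup, and to imitate the classical computation, replacing the semigroup law by the cocycle relation $R(t,s+r)=R(t+r,s)R(t,r)$. First I would record that $D_\lambda(t,s)$ is a well-defined bounded linear operator: by Theorem \ref{l0}(1) the map $h\mapsto R(t,h)x$ is strongly continuous on $[0,s]$, so the integral exists; linearity in $x$ is immediate; and the growth bound $\|R(t,h)\|\le M(t+h)$ gives $\|D_\lambda(t,s)x\|\le\left(\int_0^s e^{\mathrm{Re}\,\lambda\,(s-h)}M(t+h)\,dh\right)\|x\|<\infty$. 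This part is routine.

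The heart of the argument is statement (1), which must be proved for \emph{all} $x\in X$ (note that $\mathcal D$ need not be dense, so no density extension from $\mathcal D$ is available). I would show directly that $D_\lambda(t,s)x\in\mathcal D$ and compute $A(t)D_\lambda(t,s)x$ from the definition of the generator, forming $\tfrac1\sigma\,(R(t,\sigma)-I)D_\lambda(t,s)x$ and letting $\sigma\to0^+$. Carrying $R(t,\sigma)$ inside the integral yields $\int_0^s e^{\lambda(s-h)}R(t,\sigma)R(t,h)x\,dh$; after re-expressing the composition through the quasi-semigroup relation and performing the change of variables $u=h+\sigma$, the difference quotient splits, following the classical template, into a factor tending to $\lambda$ multiplying $\int_\sigma^{s+\sigma}e^{\lambda(s-u)}R(t,u)x\,du$, plus the boundary increment $\tfrac1\sigma\left(\int_s^{s+\sigma}-\int_0^\sigma\right)e^{\lambda(s-u)}R(t,u)x\,du$. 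Passing to the limit, the first piece converges to $\lambda D_\lambda(t,s)x$ while, by the endpoint (fundamental-theorem-of-calculus) statement Theorem \ref{l0}(6) together with the continuity in Theorem \ref{l0}(1), the two increments converge to $R(t,s)x$ and $e^{\lambda s}x$. This gives $A(t)D_\lambda(t,s)x=\lambda D_\lambda(t,s)x-(e^{\lambda s}-R(t,s))x$, which is exactly (1).

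Statement (2) I would then deduce from (1). For $x\in\mathcal D$, expand
\[
D_\lambda(t,s)(\lambda-A(t))x=\lambda D_\lambda(t,s)x-\int_0^s e^{\lambda(s-h)}R(t,h)A(t)x\,dh .
\]
The commutation $R(t,h)A(t)x=A(t)R(t,h)x$ of Theorem \ref{l0}(3) lets me transport $A(t)$ through the integral, so that the last integral equals $A(t)D_\lambda(t,s)x$; substituting this and invoking (1) collapses the right-hand side to $(e^{\lambda s}-R(t,s))x$. (Alternatively, one integrates by parts against $e^{\lambda(s-h)}$, using the derivative formula Theorem \ref{l0}(4) to evaluate the integral directly to $R(t,s)x-e^{\lambda s}x+\lambda D_\lambda(t,s)x$, reaching the same conclusion.)

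The delicate point, and the step I expect to be the main obstacle, is the passage to the limit under the integral sign in (1): one must justify interchanging $\lim_{\sigma\to0^+}$ with $\int_0^s$ and, above all, treat the composition $R(t,\sigma)R(t,h)$ correctly through the quasi-semigroup law rather than through a naive semigroup additivity, since for a genuine quasi-semigroup $R(t,\sigma)R(t,h)$ does not reduce to $R(t,h+\sigma)$ in general. The uniform growth bound $\|R(t,h)\|\le M(t+h)$ supplies the domination needed for the limit, and Theorem \ref{l0}(6) controls the two endpoint contributions; carrying these estimates through carefully is what makes the difference-quotient computation rigorous.
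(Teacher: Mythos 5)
The step you yourself flag as ``the delicate point'' is not a technical nuisance but a fatal gap, and your proposal never closes it. After pulling $R(t,\sigma)$ inside the integral you face $R(t,\sigma)R(t,h)x$, and the quasi-semigroup law only gives $R(t,h+\sigma)=R(t+h,\sigma)R(t,h)$: the factor that merges with $R(t,h)$ is $R(t+h,\sigma)$, whose \emph{first} argument moves with the integration variable, whereas the generator's difference quotient hands you the fixed operator $R(t,\sigma)$. Replacing one by the other costs the error
$\frac{1}{\sigma}\int_0^s e^{\lambda(s-h)}\bigl[R(t,\sigma)-R(t+h,\sigma)\bigr]R(t,h)x\,dh$,
which does not tend to $0$: formally it tends to $\int_0^s e^{\lambda(s-h)}\bigl[A(t)-A(t+h)\bigr]R(t,h)x\,dh$. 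What your ``classical template'' actually proves (equivalently, integrate $\frac{\partial}{\partial h}\bigl[-e^{\lambda(s-h)}R(t,h)x\bigr]$ using Theorem \ref{l0}(4)) is
$$\int_0^s e^{\lambda(s-h)}\bigl(\lambda-A(t+h)\bigr)R(t,h)x\,dh=\bigl[e^{\lambda s}-R(t,s)\bigr]x,$$
with $A(t+h)$ inside the integral --- not the asserted identity with the single operator $\lambda-A(t)$ outside it.

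No refinement of your limit analysis can repair this, because the identity of Theorem \ref{t1} is false for a genuinely time-dependent generator: on $X=\mathbb{C}$ take $R(t,s)=\exp\bigl(\int_t^{t+s}\tau\,d\tau\bigr)=e^{ts+s^2/2}$, a $C_0$-quasi-semigroup with $A(t)=tI$; for $t=0$, $\lambda=0$ the left-hand side of (1) is $0$ while the right-hand side is $(1-e^{s^2/2})x\neq 0$. You should know that the paper's own proof crosses exactly the same gap only by asserting ``$A(t+h)=A(t)$ for all $t,h\geq 0$'', i.e.\ by tacitly making the generator family constant --- so your instinct located the obstacle correctly, but it is an obstruction to the theorem itself, not merely to the argument. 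The same illicit substitution also poisons your deduction of (2): Theorem \ref{l0}(3) commutes $R(t,h)$ with the fixed $A(t)$, but the integration by parts requires $R(t,h)A(t+h)x$ as in Theorem \ref{l0}(4); and pulling $A(t)$ out of the integral would in addition need closedness of $A(t)$, which Theorem \ref{t1} does not assume.
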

	
	\begin{proof}
		\begin{enumerate}
			\item For all $x\in X$ we have
			
			\begin{eqnarray*}
				R(0,r)D_\lambda(t,s)x   &=&   R(0,r) \int_0^s e^{\lambda(s-h)}R(t,h)xdh\\
				&=&\int_0^s e^{\lambda(s-h)}R(0,r)R(t,h)xdh \\
			\end{eqnarray*}

			And  we obtain ,
			
			\begin{eqnarray*}
				\lim_{r\rightarrow 0^+}\frac{R(0,r)D_\lambda(t,s)x - D_\lambda(t,s)x}{r} &=& \lim_{r\rightarrow 0^+}\frac{\int_0^s e^{\lambda(s-h)}R(0,r)R(t,h)xdh -\int_0^s e^{\lambda(s-h)}R(t,h)xdh}{r}\\
				&=&\frac{\partial}{\partial r}\left[ \int_0^{s}e^{\lambda(s-h)}R(0,r)R(t,h)xdh \right] _{|r=0} \\
			\end{eqnarray*}

			Then  $\lim_{r\rightarrow 0^+}\frac{R(0,r)D_\lambda(t,s)x - D_\lambda(t,s)x}{r} $   exists.\\

			And,
			
			\begin{eqnarray*}
				\lim_{r\rightarrow 0^+}\frac{R(t,r)D_\lambda(t,s)x - D_\lambda(t,s)x}{r}  &=&   \lim_{r\rightarrow 0^+}\frac{\int_0^s e^{\lambda(s-h)}R(t,r)R(t,h)xdh -\int_0^s e^{\lambda(s-h)}R(t,h)xdh}{r}\\
				&=& \frac{\partial}{\partial r}\left[ \int_0^{s}e^{\lambda(s-h)}R(t,r)R(t,h)xdh \right] _{|r=0} \\
				&=&\left[ \int_0^{s}e^{\lambda(s-h)}\frac{\partial}{\partial r}(R(t,r))R(t,h)xdh \right] _{|r=0} \\
				&=& \left[ \int_0^{s}e^{\lambda(s-h)}A(t+r)R(t,r)R(t,h)xdh \right] _{|r=0} \\
				&=& \int_0^{s}e^{\lambda(s-h)}A(t)R(t,h)xdh      
			\end{eqnarray*}

			Moreover,
			\begin{eqnarray*} 
				\lim_{r\rightarrow 0^+}\frac{R(t - r,r)D_\lambda(t,s)x - D_\lambda(t,s)x}{r}  &=&    \lim_{r\rightarrow 0^+}\frac{\int_0^s e^{\lambda(s-h)}R(t-r,r)R(t,h)xdh -\int_0^s e^{\lambda(s-h)}R(t,h)xdh}{r}\\
				&=& \frac{\partial}{\partial r}\left[ \int_0^{s}e^{\lambda(s-h)}R(t-r,r)R(t,h)xdh \right] _{|r=0} \\
				&=&\left[ \int_0^{s}e^{\lambda(s-h)}\frac{\partial}{\partial r}(R(t-r,r))R(t,h)xdh \right] _{|r=0} \\
				&=&\left[ \int_0^{s}e^{\lambda(s-h)}A(t)R(t-r,r)R(t,h)xdh \right] _{|r=0} \\
				&=&\int_0^{s}e^{\lambda(s-h)}A(t)R(t,h)xdh     
			\end{eqnarray*}

			Thus , $\lim_{r\rightarrow 0^+}\frac{R(t,r)D_\lambda(t,s)x - D_\lambda(t,s)x}{r}  = \lim_{r\rightarrow 0^+}\frac{R(t - r,r)D_\lambda(t,s)x - D_\lambda(t,s)x}{r}$

			Hence, we deduce that $D_\lambda(t , s)x\in \mathcal{D}$ And , \\
			
			\begin{eqnarray*}
				A(t)D_\lambda(t,s)x  &=& \int_0^{s}e^{\lambda(s-h)}A(t)R(t,h)xdh \\
				&=&  \int_0^{s}e^{\lambda(s-h)}A(t+h)R(t,h)xdh ,\; \; because \; \;  A(t+h) = A(t) for \; all \; \;   t,h \geq 0 .\\
				&=&  \int_0^{s}e^{\lambda(s-h)}\frac{\partial}{\partial h}(R(t,h))xdh \\
				&=&  \left[ e^{\lambda(s-h)}R(t,h) \right]_{0}^{s} + \lambda\int_0^{s}e^{\lambda(s-h)}R(t,h)xdh \\
				&=&  R(t,s)x - e^{\lambda s}x + \lambda D_\lambda(t,s)x
			\end{eqnarray*}                    
			
			Finaly , $(\lambda - A(t))D_\lambda(t,s)x=[e^{\lambda s} -R(t,s)]x $ for all $x \in X $.

			\item For all $x\in \mathcal{D} $ and all $t ,s \geq 0$ we have ,
			\begin{eqnarray*}
				D_\lambda(t ,s)A(t)x &=& \int_0^s e^{\lambda(s-h)}R(t,h)A(t)xdh\\
				&=& \int_0^s e^{\lambda(s-h)}R(t,h)A(t+h)xdh\\
				&=& \int_0^{s}e^{\lambda(s-h)}\frac{\partial}{\partial r}(R(t,h))xdh\\
				&=&  \left[ e^{\lambda(s-h)}R(t,h) \right]_{0}^{s} + \lambda\int_0^{s}e^{\lambda(s-h)}R(t,h)xdh \\
				&=&  R(t,s)x - e^{\lambda s}x + \lambda D_\lambda(t,s)x
			\end{eqnarray*}
			Thus, we deduce for all $x\in D(A)$
			$D_\lambda(t,s)(\lambda - A(t))x=[e^{\lambda s} - R(t,s)]x $.
		\end{enumerate}
	\end{proof}
	

		\begin{coro}\label{c1}  Let $A(t)$ be the generator of a $C_{0}$-quasi-semigroup $(R(t,s))_{t,s\geq 0}$ . Then for all $\lambda \in \mathbb{C}$ , $t,s\geq 0$ and $n \in \mathbb{N}$ ,
		\begin{enumerate}
			\item For all  $x\in X$ ,
			\begin{center}
				$(\lambda-A(t))^n[D_\lambda(t,s)]^nx=[e^{\lambda s}-R(t,s)]^nx$.
			\end{center}
			\item For all  $x\in \mathcal{D}^n$ \, (Domain of $A(t)^n$)  ,  
			\begin{center}
				$[D_\lambda(t,s)]^n(\lambda-A(t)^n)x=[e^{\lambda s}-R(t,s)]^nx.$
			\end{center}
			\item $N[\lambda-A(t)]\subseteq N[e^{\lambda s}-R(t,s)].$
			\item $Rg[e^{\lambda s}-R(t,s)]\subseteq Rg[\lambda-A(t)].$
			\item $N[\lambda-A(t)]^n\subseteq N[e^{\lambda s}-R(t,s)]^n.$
			\item $Rg[e^{\lambda s}-R(t,s)]^n\subseteq Rg[\lambda-A(t)]^n.$\\
		\end{enumerate}
	\end{coro}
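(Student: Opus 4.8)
The plan is to split the six assertions into two groups: the algebraic identities (1) and (2), each proved by induction on $n$, and then the four inclusions (3)--(6), which fall out formally from the case $n=1$ (Theorem \ref{t1}) and from the identities just established. Throughout I would abbreviate $B=\lambda-A(t)$, $C=e^{\lambda s}-R(t,s)$ and $D=D_\lambda(t,s)$, so that Theorem \ref{t1} reads $BDx=Cx$ for all $x\in X$ and $DBx=Cx$ for all $x\in\mathcal D$, recalling that $Dx\in\mathcal D$ for every $x\in X$.

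The key preliminary, and the step that makes the inductions run, is the commutation relation $CDx=DCx$ for all $x\in X$. I would read it off directly from the two parts of Theorem \ref{t1}: given $x\in X$ one has $Dx\in\mathcal D$, so Theorem \ref{t1}(2) applied to the vector $Dx$ gives $DB(Dx)=C(Dx)=CDx$, while Theorem \ref{t1}(1) gives $B(Dx)=Cx$, whence $DB(Dx)=D(Cx)=DCx$. Comparing the two expressions yields $CDx=DCx$, and a one-line induction promotes this to $CD^{n}=D^{n}C$ on $X$ for every $n$.

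With commutativity in hand, identity (1) is an induction on $n$ with base case Theorem \ref{t1}(1): in the step I would write $B^{n+1}D^{n+1}x=B^{n}\bigl(BD(D^{n}x)\bigr)=B^{n}(CD^{n}x)$ by Theorem \ref{t1}(1) on $D^{n}x\in X$, then slide $C$ past $D^{n}$ to obtain $B^{n}(D^{n}Cx)=(B^{n}D^{n})(Cx)=C^{n}(Cx)=C^{n+1}x$ by the induction hypothesis applied to $Cx\in X$. Identity (2) is the mirror-image induction starting from Theorem \ref{t1}(2): for $x$ in the domain $\mathcal D^{n+1}$ of $(\lambda-A(t))^{n+1}$ one peels off $DB$ from the inside using Theorem \ref{t1}(2) on $B^{n}x\in\mathcal D$, then applies $D^{n}C=CD^{n}$ and the induction hypothesis. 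I expect the only real friction here to be the bookkeeping of domains: one must verify that $D^{n}$ maps $X$ into $\mathcal D^{n}$ and that each intermediate application of $\lambda-A(t)$ lands on a vector of $\mathcal D$, so that all the compositions are legitimate. This I would justify inductively from $Dx\in\mathcal D$ together with the invariance $R(t,s)\mathcal D\subseteq\mathcal D$ coming from Theorem \ref{l0}(3), which keeps $Cx$ inside $\mathcal D$ whenever $x\in\mathcal D$; the asymmetry that (1) holds on all of $X$ while (2) holds only on $\mathcal D$ is exactly what forces two separate inductions.

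The inclusions then follow with no further work. For (3), if $x\in N(\lambda-A(t))$ then $x\in\mathcal D$ and Theorem \ref{t1}(2) gives $Cx=DBx=0$, so $x\in N(C)$; for (4), every $Cx$ equals $B(Dx)$ with $Dx\in\mathcal D$ by Theorem \ref{t1}(1), so $Cx\in Rg(B)$. Replacing Theorem \ref{t1} by the identities (1) and (2) gives (5) and (6) verbatim: $B^{n}x=0$ forces $C^{n}x=D^{n}B^{n}x=0$, and $C^{n}x=B^{n}(D^{n}x)\in Rg(B^{n})$.
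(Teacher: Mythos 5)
Your proof is correct and takes essentially the same route as the paper, whose entire proof is the remark that the corollary ``follows easily from'' the chain $e^{\lambda s}x-R(t,s)x=(\lambda-A(t))D_\lambda(t,s)x=D_\lambda(t,s)(\lambda-A(t))x$ of Theorem \ref{t1} --- that is, from iterating the two factorization identities. Your write-up merely supplies the details the paper leaves implicit (the commutation relation $[e^{\lambda s}-R(t,s)]D_\lambda(t,s)=D_\lambda(t,s)[e^{\lambda s}-R(t,s)]$ deduced from the two parts of Theorem \ref{t1}, the two inductions, and the domain bookkeeping via $D_\lambda(t,s)X\subseteq\mathcal{D}$ and $R(t,s)\mathcal{D}\subseteq\mathcal{D}$), so it is a faithful and indeed more rigorous elaboration of the paper's argument rather than a different one.
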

	\begin{proof}
		
		follow easily from ,
		\begin{eqnarray*}
			e^{\lambda s}x - R(t,s)x &=& (\lambda - A(t))D_\lambda(t,s)x \\
			&=&   D_\lambda(t,s)(\lambda - A(t))x
		\end{eqnarray*}
		\end {proof}


To obtain the results concerning the semi-Fredholm and semi-Browder spectra we need the following theorem.
		\begin{thm}\label{t2}Let $A(t) $ be a closed  and densely defined  generator of  a $C_{0}-$quasi-semigroup  $(R(t,s))_{t,s\geq 0}$ on  a 	Banach  space $X$ . Then for all $\lambda\in\mathbb{C}$ and all $t,s\geq 0$ we have,
			\begin{enumerate}
				\item $(\lambda-A(t))L_\lambda(t,s)+\varphi_\lambda(s)D_\lambda(t,s)= \phi_\lambda(s)I ,$
				where $L_\lambda(t,s)=\int_0^s e^{-\lambda h }D_\lambda(t,h)dh\,\,$ , $\,\,\varphi_\lambda(s)=e^{-\lambda s}$ and $\phi_\lambda(s) =s$.
				
				Moreover, the operators $L_\lambda(t,s),$ $D_\lambda(t,s)$ and $(\lambda-A(t))$ are mutually commuting.
				
				\item For all $n\in\mathbb{N}^*,$ there exists an operator $D_{\lambda,n}(t,s)\in \mathcal{B}(X)$ such that, 
				\begin{center}
					$(\lambda-A(t))^n[L_\lambda(t,s)]^n+D_{\lambda,n}(t,s)D_\lambda(t,s)=s^{n}.$
				\end{center}
				Moreover, the operator $D_{\lambda,n}(t,s)$ is commute with each one of $D_\lambda(t,s)\,$ and $\,L_\lambda(t,s)$.
				
				\item  For all $n\in\mathbb{N}^*,$ there exists an operator $K_{\lambda,n}(t,s)\in \mathcal{B}(X)$ such that,
					$$(\lambda-A(t))^nK_{\lambda,n}(t,s)+[D_{\lambda,n}(t,s)]^n[D_\lambda(t,s)]^n =s^{n^2}.$$
	
				Moreover, the operator $K_{\lambda,n}(t,s)$ is commute with each one of $D_\lambda(t,s)\,$ and $\,D_{\lambda,n}(t,s)$.
			\end{enumerate}
		\end{thm}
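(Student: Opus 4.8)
The plan is to prove (1) directly from Theorem \ref{t1} by an integration, and then to obtain (2) and (3) from (1) by purely algebraic (binomial) manipulations that exploit the mutual commutativity asserted in (1).

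For (1), I would start from $L_\lambda(t,s)=\int_0^s e^{-\lambda h}D_\lambda(t,h)\,dh$ and push the closed operator $\lambda-A(t)$ under the integral sign (justified because $A(t)$ is closed and the integrand lies in its domain, exactly as in the proof of Theorem \ref{t1}). Applying the first identity of Theorem \ref{t1}, namely $(\lambda-A(t))D_\lambda(t,h)=e^{\lambda h}-R(t,h)$, gives $(\lambda-A(t))L_\lambda(t,s)=\int_0^s e^{-\lambda h}\big(e^{\lambda h}-R(t,h)\big)\,dh=s\,I-\int_0^s e^{-\lambda h}R(t,h)\,dh$. Since $\varphi_\lambda(s)D_\lambda(t,s)=e^{-\lambda s}\int_0^s e^{\lambda(s-h)}R(t,h)\,dh=\int_0^s e^{-\lambda h}R(t,h)\,dh$, adding the two terms cancels the integral and yields $(\lambda-A(t))L_\lambda(t,s)+\varphi_\lambda(s)D_\lambda(t,s)=s\,I=\phi_\lambda(s)I$, as required.

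The commutativity statement is the pivot and the step I expect to be most delicate. That $D_\lambda(t,s)$ commutes with $\lambda-A(t)$ is precisely the content of the two identities of Theorem \ref{t1} (on $\mathcal{D}$ both compositions equal $e^{\lambda s}-R(t,s)$, and $D_\lambda(t,s)x\in\mathcal{D}$ for every $x$); that $L_\lambda(t,s)$ commutes with $\lambda-A(t)$ then follows by integrating this fact in $h$ and again invoking closedness. The harder point is that $L_\lambda(t,s)$ commutes with $D_\lambda(t,s)$: since $L_\lambda(t,s)=\int_0^s e^{-\lambda h}D_\lambda(t,h)\,dh$, this reduces to the commutativity of the family $\{D_\lambda(t,h)\}_{h\ge 0}$, i.e.\ ultimately to $R(t,a)R(t,b)=R(t,b)R(t,a)$. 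I would derive the latter from the differential relation $\partial_s R(t,s)x=A(t+s)R(t,s)x$ of Theorem \ref{l0}(4) together with the constancy of the generator used in Theorem \ref{t1}, which makes $R(t,\cdot)$ a one-parameter family governed by the single operator $A(t)$; the commutativity of the $D_\lambda(t,h)$, and hence of $L_\lambda(t,s)$ with $D_\lambda(t,s)$, then follows by Fubini.

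Granting (1), both (2) and (3) are elementary binomial computations. Writing $B=\lambda-A(t)$, $L=L_\lambda(t,s)$, $D=D_\lambda(t,s)$, relation (1) reads $BL=s\,I-e^{-\lambda s}D$, a \emph{bounded} operator, with $B,L,D$ mutually commuting, so $B^nL^n=(BL)^n=(s\,I-e^{-\lambda s}D)^n$. Expanding and separating the leading term $s^nI$, all remaining terms carry a factor $D$; collecting them defines $D_{\lambda,n}(t,s):=-\sum_{k=1}^n\binom{n}{k}s^{\,n-k}(-e^{-\lambda s})^{k}[D_\lambda(t,s)]^{k-1}\in\mathcal{B}(X)$, a polynomial in $D$ that commutes with $D$ and $L$ and satisfies $B^nL^n+D_{\lambda,n}(t,s)D=s^n$. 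For (3), set $E=D_{\lambda,n}(t,s)D$, so (2) reads $B^nL^n=s^nI-E$; raising to the $n$-th power gives $E^n=(s^nI-B^nL^n)^n$, and expanding yields $s^{n^2}I-E^n=B^n\,K_{\lambda,n}(t,s)$ with $K_{\lambda,n}(t,s):=-\sum_{k=1}^n\binom{n}{k}s^{\,n(n-k)}(-1)^k (B^nL^n)^{k-1}L^n$, where I have used $(B^nL^n)^k=B^n(B^nL^n)^{k-1}L^n$. The one thing to check is boundedness despite the powers of $B$: each summand equals a scalar times $(B^nL^n)^{k-1}L^n$, and $B^nL^n=s^nI-E$ is bounded, so $K_{\lambda,n}(t,s)\in\mathcal{B}(X)$; it commutes with $D$ and $D_{\lambda,n}(t,s)$ since it is built from $L$ and the bounded operator $B^nL^n=s^nI-D_{\lambda,n}(t,s)D$. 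Because $(D_{\lambda,n}(t,s))^nD^n=E^n$ by commutativity, this delivers $B^nK_{\lambda,n}(t,s)+(D_{\lambda,n}(t,s))^n(D_\lambda(t,s))^n=s^{n^2}$, completing (3).
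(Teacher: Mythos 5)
Your proposal is correct and takes essentially the same route as the paper: part (1) by integrating the identity of Theorem \ref{t1} against $e^{-\lambda h}$ (you justify moving $\lambda-A(t)$ under the integral by closedness, where the paper instead sandwiches with a resolvent $\mathcal{R}(\mu,A(t))$ for some $\mu\in\rho(A(t))$ --- an immaterial difference), with commutativity reduced in both cases to $R(t,a)R(t,b)=R(t,b)R(t,a)$. Parts (2) and (3) use exactly the paper's binomial expansions, your $D_{\lambda,n}(t,s)$ coinciding with the paper's and your $K_{\lambda,n}(t,s)$ equal to the paper's after commuting factors; indeed your form $\bigl[(\lambda-A(t))^nL_\lambda(t,s)^n\bigr]^{k-1}L_\lambda(t,s)^n$ makes the boundedness of $K_{\lambda,n}(t,s)$ explicit, a point the paper leaves implicit.
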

		\begin{proof}
			\begin{enumerate}
				\item Let $\mu\in \rho(A(t))$. By theorem  \ref{t2}, for all $x\in X$ we have $D_\lambda(t,h)x\in \mathcal{D}$ and hence,for all $t,s \geq 0$,
				\begin{eqnarray*}
					L_\lambda(t,s)x &=& \int_0^se^{-\lambda h}D_\lambda(t,h)xdh\\
					&=& \int_0^se^{-\lambda h}\mathcal{R}(\mu,A(t))(\mu-A(t))D_\lambda(t,h)xdh , \; \; where \; \mathcal{R}(\mu,A(t)) \; is \; the \;  resolvent \;  of \; A(t)\\
					&=& \mathcal{R}(\mu,A(t))[\mu\int_0^se^{-\lambda h}D_\lambda(t,h)xdh-\int_0^se^{-\lambda h}A(t)D_\lambda(t,h)xdh]\; \; \; (\mathcal{R}  (\mu,A(t)) \; is \; bounded)\\
					&=& \mathcal{R}(\mu,A(t))[\mu L_\lambda(t,s)x-\int_0^se^{-\lambda h}A(t)D_\lambda(t,h)xdh]
				\end{eqnarray*}
				Therefore for all $x\in X$, we have $L_\lambda(t,s)x\in \mathcal{D}$ and
				$$(\mu-A(t))L_\lambda(t,s)x=\mu L_\lambda(t,s)x-\int_0^se^{-\lambda h}A(t)D_\lambda(t,h)xdh.$$
				Thus
				$$A(t)L_\lambda(t,s)x=\int_0^se^{-\lambda h}A(t)D_\lambda(t,h)xdh.$$
				Hence, we conclude that
				\begin{eqnarray*}
					(\lambda-A(t))L_\lambda(t,s)x&=&\lambda L_\lambda(t,s)x -\int_0^se^{-\lambda h}A(t)D_\lambda(t,h)xdh\\
					&=&\lambda L_\lambda(t,s)x -\int_0^se^{-\lambda h}\big[\lambda D_\lambda(t,h)x- e^{\lambda h}x +
					R(t,h)x\big]dh \; \; (Theorem \, \ref{t2})\\
					&=&\lambda L_\lambda(t,s)x -\lambda\int_0^se^{-\lambda h}D_\lambda(t,h)x ds +\int_0^s x -\int_0^se^{-\lambda sh}R(t,h)xdh\\
					&=&\lambda L_\lambda(t,s)x -\lambda L_\lambda(t,s)x + sx -e^{-\lambda s}\int_0^se^{\lambda(s-h)}R(t,h)xdh  \\
					&=& sx -e^{-\lambda s}D_\lambda (t,s)x  \\
					&=& \big[\phi_\lambda(s)-\varphi_\lambda(s)D_\lambda(t,s)\big]x,
				\end{eqnarray*}
				where $\phi_\lambda(s)=s$ and $\varphi_\lambda(s)=e^{-\lambda s}.$\\
				Therefore, we obtain $(\lambda-A(t))L_\lambda(t,s)+\varphi_\lambda(s)D_\lambda(t,s)=\phi_\lambda(s)I.$ \\
				On the other hand,for all $s > t <r\geq 0$ we have ,$R(t,s)R(t,r)=R(t-s,s)R(t,r)=R(t-s-r,s+r)$ and 
				$R(t,r)R(t,s)=R(t-r,r)R(t,s)=R(t-r-s-r,s+r)$
				Then $R(t,s)R(t,r)=R(t,r)R(t,s)$ for all $s,t,r\geq 0$, then
				$D_\lambda(t,h)R(t,s)=R(t,s)D_\lambda(t,h).$\\
				Hence
				\begin{eqnarray*}
					D_\lambda(t,s)D_\lambda(t,r) &=& \int_0^se^{\lambda(s-h)}R(t,h)D_\lambda(t,r)dh\\
					&=&  \int_0^se^{\lambda(s-h)}D_\lambda(t,r)R(t,h)dh\\
					&=& D_\lambda(t,r)\int_0^se^{\lambda(s-h)}R(t,h)dh\\
					&=& D_\lambda(t,r)D_\lambda(t,s).
				\end{eqnarray*}
				Thus, we deduce that
				\begin{eqnarray*}
					D_\lambda(t,s)L_\lambda(t,s) &=& D_\lambda(t,s)\int_0^se^{-\lambda h}D_\lambda(t,h)dh\\
					&=& \int_0^te^{-\lambda s}D_\lambda(t,s)D_\lambda(t,h)dh\\
					&=& \int_0^se^{-\lambda h}D_\lambda(t,h)D_\lambda(t,s)dh\\
					&=& \int_0^se^{-\lambda h}D_\lambda(t,h)dh D_\lambda(t,s)\\
					&=& L_\lambda(t,s)D_\lambda(t,s).
				\end{eqnarray*}
				Since for all $x\in X$, $A(t)L_\lambda(t,s)x=\int_0^se^{-\lambda h}A(t)D_\lambda(t,h)xdh$ and for all $x\in \mathcal{D}$, $A(t)D_\lambda(t,h)x=D_\lambda(t,h)A(t)x,$
				then we obtain for all $x\in \mathcal{D}$,
				\begin{eqnarray*}
					(\lambda-A(t))L_\lambda(t,s)x &=& \lambda L_\lambda(t,s)x-A(t)L_\lambda(t)x\\	     		                   &=& \lambda L_\lambda(t,s)x -\int_0^se^{-\lambda h}A(t)D_\lambda(t,h)xdh\\
					&=&\lambda L_\lambda(t,s)x -\int_0^se^{-\lambda h}D_\lambda(t,h)A(t)xdh\\
					&=& \lambda L_\lambda(t,s)x- L_\lambda(t,s)A(t)x\\
					&=& L_\lambda(t,s)(\lambda-A(t))x.
				\end{eqnarray*}

				\item For all $n\in \mathbb{N}^*$, we obtain
				\begin{eqnarray*}
					[(\lambda-A(t))L_\lambda(t,s)]^n &=&[s-\varphi_\lambda(s)D_\lambda(t,s)]^n\\
					&=&\sum_{i=0}^n C_n^is^{n-i}[-\varphi_\lambda(s)D_\lambda(t,s)]^i\\
					&=& s^n + \sum_{i=1}^n C_n^is^{n-i}[-\varphi_\lambda(s)D_\lambda(t,s)]^i\\
					&=& s^n -D_\lambda(t,s)\sum_{i=1}^nC_n^i s^{n-i}[\varphi_\lambda(s)]^{i}[-D_\lambda(t,s)]^{i-1}\\
					&=& s^n -D_\lambda(t,s)D_{\lambda,n}(t,s),
				\end{eqnarray*}
				where $$D_{\lambda,n}(t,s)=\sum_{i=1}^n C_n^i s^{n-i}[\varphi_\lambda(s)]^{i}[-D_\lambda(t,s)]^{i-1}.$$
				Therefore, we have
				$$(\lambda-A(t))^n[L_\lambda(t,s)]^n+D_\lambda(t,s)D_{\lambda,n}(t,s)=s^n.$$
				Finally, for commutativity, it is clear that $D_{\lambda,n}(t,s)$ commute with each one of $D_\lambda(t,s)\,$ and $\,L_\lambda(t,s)$ since the operators $L_\lambda(t,s),$ $D_\lambda(t,s)$ and $(\lambda-A(t))$ are mutually commuting from (1).
				\item
				Since we have
				$D_\lambda(t,s)D_{\lambda,n}(t,s)=s^n-(\lambda-A(t))^n[L_\lambda(t,s)]^n,$
				then for all $n\in \mathbb{N}$
				\begin{eqnarray*}
					[D_\lambda(t,s)D_{\lambda,n}(t,s)]^n &=& \big[s^n-(\lambda-A(t))^n[L_\lambda(t,s)]^n\big]^n\\
					&=& s^{n^2}-\sum_{i=1}^n C_n^i \big[s^{n}\big]^{n-i}\big[(\lambda-A(t))^n[L_\lambda(t,s)]^n\big]^i\\
					&=& s^{n^2}-(\lambda-A(t))^n\sum_{i=1}^nC_n^i \big[s^{n(n-i)}(\lambda-A(t))^{n(i-1)}[L_\lambda(t,s)]^{ni}\\
					&=& s^{n^2}-(\lambda-A(t))^nK_{\lambda,n}(t,s),
				\end{eqnarray*}
				where $K_{\lambda,n}(t,s)=\sum_{i=1}^nC_n^i
				s^{n(n-i)}(\lambda-A(t))^{n(i-1)}[L_\lambda(t,s)]^{ni}.$
				Hence, we obtain
				$$[D_\lambda(t,s)]^n[D_{\lambda,n}(t,s)]^n +(\lambda-A(t))^nK_{\lambda,n}(t,s) =s^{n^2}.$$
				Finally, the commutativity is clear.
			\end{enumerate}
		\end{proof}

	We start by this result.
	
	\begin{prop}\label{p1} Let $A(t) $ be a closed  and densely defined  generator of  a $C_{0}-$quasi-semigroup $(R(t,s))_{t,s\geq 0}$ on  a 	Banach  space $X$. If \; $Rg[e^{\lambda s}-R(t,s)]^{n}$ is closed, then $Rg[\lambda-A(t)]^{n}$ is also closed.
	\end{prop}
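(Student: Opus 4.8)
The plan is to prove that $Rg[\lambda-A(t)]^{n}$ is closed by showing directly that every $y$ in its closure already belongs to it. Throughout I restrict to $s>0$, which is the case in which the factorizations of Theorem \ref{t2} carry content (at $s=0$ one has $e^{\lambda s}-R(t,s)=0$, and the division by $s^{n^2}$ below is illegitimate). For brevity write $B=\lambda-A(t)$, $S=e^{\lambda s}-R(t,s)$, $D=D_\lambda(t,s)$, $D_n=D_{\lambda,n}(t,s)$ and $K_n=K_{\lambda,n}(t,s)$. Fix $y\in\overline{Rg(B^{n})}$ and choose $x_k\in\mathcal{D}^{n}$ with $B^{n}x_k\to y$. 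The goal is to produce a single preimage of $y$ under $B^{n}$.

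The first step, and the one where the hypothesis is used, is to push the approximating sequence through the bounded operator $D^{n}=[D_\lambda(t,s)]^{n}$. By Corollary \ref{c1}(2), for each $x_k\in\mathcal{D}^{n}$ we have $D^{n}B^{n}x_k=S^{n}x_k\in Rg(S^{n})$. Since $D^{n}$ is bounded and $B^{n}x_k\to y$, the left-hand side converges to $D^{n}y$, so $D^{n}y$ is a limit of elements of $Rg(S^{n})$. Now $Rg(S^{n})=Rg[e^{\lambda s}-R(t,s)]^{n}$ is closed by hypothesis, hence $D^{n}y\in Rg(S^{n})$, i.e. there is $w\in X$ with
\begin{equation*}
D^{n}y=S^{n}w .
\end{equation*}

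Next I would substitute this into the identity of Theorem \ref{t2}(3), namely $B^{n}K_n+D_n^{n}D^{n}=s^{n^2}I$, evaluated at $y$, and then use Corollary \ref{c1}(1) in the form $S^{n}w=B^{n}D^{n}w$ together with the fact that $D_n$ commutes with $\lambda-A(t)$ (Theorem \ref{t2}):
\begin{equation*}
s^{n^2}y=B^{n}K_n y+D_n^{n}D^{n}y=B^{n}K_n y+D_n^{n}S^{n}w=B^{n}K_n y+B^{n}\big(D_n^{n}D^{n}w\big).
\end{equation*}
Factoring out the common $B^{n}$ gives $s^{n^2}y=B^{n}\big(K_n y+D_n^{n}D^{n}w\big)$, and dividing by $s^{n^2}$ yields $y=B^{n}\big(s^{-n^2}(K_n y+D_n^{n}D^{n}w)\big)\in Rg[\lambda-A(t)]^{n}$, as required.

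The only real obstacle is the bookkeeping needed to justify pulling $B^{n}$ outside the sum: I must verify that both $K_n y$ and $D_n^{n}D^{n}w$ lie in the domain $\mathcal{D}^{n}=D(A(t)^{n})$ and that $D_n^{n}$ commutes with $B^{n}$ there. These follow from the construction in Theorem \ref{t2}: $D$ and $L_\lambda(t,s)$ map $X$ into $\mathcal{D}$, the products $B^{j}L^{j}$ are bounded, $D_n$ commutes with $\lambda-A(t)$, and $B^{n}K_n=s^{n^2}I-D_n^{n}D^{n}$ is a bounded operator on all of $X$, which forces $Rg(K_n)\subseteq\mathcal{D}^{n}$. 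I would also emphasize why the inclusion $Rg(S^{n})\subseteq Rg(B^{n})$ of Corollary \ref{c1}(6) does not suffice on its own: in general $Rg(S^{n})\subsetneq Rg(B^{n})$, so closedness cannot simply be inherited along that inclusion. It is precisely the role of $D^{n}$ as a partial inverse, recovering $S^{n}$ from $B^{n}$ on $\mathcal{D}^{n}$, that lets the closed range of $S^{n}$ be transported back to $B^{n}$.
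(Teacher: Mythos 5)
Your proof is correct and takes essentially the same route as the paper's: both arguments push the approximating sequence through the bounded operators built in Theorem \ref{t2}(3) and Corollary \ref{c1} into the closed set $Rg[e^{\lambda s}-R(t,s)]^{n}$, and then solve $s^{n^2}y=(\lambda-A(t))^{n}(\cdots)$ for $s>0$. The only (cosmetic) difference is the order of operations --- you apply $[D_\lambda(t,s)]^{n}$ first to get $[D_\lambda(t,s)]^{n}y\in Rg[e^{\lambda s}-R(t,s)]^{n}$ and do the algebra at the limit, whereas the paper transports $s^{n^2}y_k-(\lambda-A(t))^{n}K_{\lambda,n}(t,s)y_k$ into that range directly --- and your explicit restriction to $s>0$ and the domain bookkeeping for $K_{\lambda,n}(t,s)$ make precise points the paper leaves implicit.
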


	\begin{proof}
		Let $(y_n)_{n\in\mathbb{N}}\subseteq X$ such that $y_n\rightarrow y\in X$ and there exists $(x_n)_{n\in\mathbb{N}}\subseteq \mathcal{D}$ satisfying
		$$(\lambda-A(t))^{n}x_n=y_n.$$
		By theorem \ref{t2}, for all $n\in\mathbb{N}^*,$ there exists $D_{\lambda,n}(t,s)$, $K_{\lambda,n}(t,s)\in \mathcal{B}(X)$ such that,
		$$(\lambda-A(t))^nK_{\lambda,n}(t,s)+[D_{\lambda,n}(t,s)]^n[D_\lambda(t,s)]^n =s^{n^2}.$$
		Hence, we conclude that
		\begin{eqnarray*}
			[e^{\lambda s}-R(t,s)]^{n}[D_{\lambda,n}(t,s)]^nx_n
			&=& D_\lambda(t,s)^{n}(\lambda - A(t))^{n}[D_{\lambda,n}(t,s)]^nx_n \; , \; (by \;  theorem \;  \ref{t1} )\\
			&=& [D_{\lambda,n}(t,s)]^nD_\lambda(t,s)^{n}(\lambda -A(t))^{n}x_n\\
			&=& [D_{\lambda,n}(t,s)]^nD_\lambda(t,s)^{n}y_n\\
			&=& s^{n^2}y_n-(\lambda-A(t))^{n}K_{\lambda,n}(t,s)y_n.
		\end{eqnarray*}
		Thus, $$s^{n^2}y_n-(\lambda-A(t))^{n}K_{\lambda,n}(t,s)y_n \in Rg[e^{\lambda s}-R(t,s)].$$
		Therefore, since $Rg[e^{\lambda s}-R(t,s)]^n$ is closed, $K_{\lambda,n}(t,s)$ is bounded linear and
		$s^{n^2}y_n-(\lambda-A(t))^{n}K_{\lambda,n}(t,s)y_n$ converges to  $s^{n^2}y-(\lambda-A(t))^{n}K_{\lambda,n}(t,s)y,$ we conclude that
		$$s^{n^2}y-(\lambda-A(t))^{n}K_{\lambda,n}(t,s)y\in Rg[e^{\lambda s}-R(t,s)].$$
		Then there exists $z\in X$ such that
		$$[e^{\lambda s}-R(t,s)]^{n}z=s^{n^2}y-(\lambda-A(t))^{n}K_{\lambda,n}(t,s)y.$$
		Hence for all $s\neq 0$, we have 
		\begin{eqnarray*}
			y &=& \frac{1}{s^{n^2}}[[e^{\lambda s}-R(t,s)]^{n}z+(\lambda-A(t))L_\lambda(t,s)y];\\
			&=& \frac{1}{s^{n^2}}[(\lambda-A(t))^{n}D_\lambda(t,s)^{n}z+(\lambda-A(t))^{n}K_{\lambda,n}(t,s)y];\\
			&=& \frac{1}{s^{n^2}}(\lambda-A(t))^{n}[D_\lambda(t,s)^{n}z+K_{\lambda,n}(t,s)y].\\
		\end{eqnarray*}
		Finally, we obtain $$y\in Rg(\lambda-A(t))^{n}.$$
	\end{proof}
	
	The following result discusses the semi-Fredholm spectrum.

	
	\begin{thm}\label{t3} Let $A(t) $ be a closed  and densely defined  generator of  a $C_{0}-$quasi-semigroup  $(R(t,s))_{t,s\geq 0}$ on  a 	Banach  space $X$. For all $\lambda\in\mathbb{C}$ and all $t,s\geq 0$, we have
		\begin{enumerate}
			\item  $e^{\sigma_{e_+}(A(t))s} \subseteq \sigma_{e_+}(R(t,s));$
			\item  $e^{\sigma_{e_-}(A(t))s} \subseteq \sigma_{e_-}(R(t,s));$
			\item $ e^{\sigma_{e_\pm(A(t))s}} \subseteq  \sigma_{e_\pm}(R(t,s)).$
		\end{enumerate}
	\end{thm}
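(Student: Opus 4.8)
The plan is to prove each inclusion by contraposition, translating a statement about spectra into a statement about membership in the semi-Fredholm classes $\Phi_+(X)$, $\Phi_-(X)$ and $\Phi_\pm(X)$. Concretely, for part (1) it suffices to establish the implication $e^{\lambda s}-R(t,s)\in\Phi_+(X)\Rightarrow\lambda-A(t)\in\Phi_+(X)$, and dually for (2) and (3). Throughout I would fix $\lambda\in\mathbb{C}$ and $t,s\geq 0$ and abbreviate $B:=e^{\lambda s}-R(t,s)$ and $C:=\lambda-A(t)$, so that Corollary \ref{c1} supplies the kernel inclusion $N(C)\subseteq N(B)$ and the range inclusion $Rg(B)\subseteq Rg(C)$, while Theorem \ref{t1} expresses $B$ as $C\,D_\lambda(t,s)$ on all of $X$.

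For part (1), assume $B\in\Phi_+(X)$, so that $\dim N(B)<\infty$ and $Rg(B)$ is closed. The kernel condition for $C$ is immediate from $N(C)\subseteq N(B)$ in Corollary \ref{c1}(3): a subspace of a finite-dimensional space is finite-dimensional, hence $\dim N(C)<\infty$. For the range condition I would invoke Proposition \ref{p1} with $n=1$: since $Rg(B)$ is closed, $Rg(C)$ is closed as well. Together these give $C\in\Phi_+(X)$, that is $\lambda\notin\sigma_{e_+}(A(t))$, which is exactly the contrapositive of (1).

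For part (2), assume $B\in\Phi_-(X)$, i.e. $\mathrm{codim}\,Rg(B)<\infty$. Here I would use the reverse inclusion $Rg(B)\subseteq Rg(C)$ from Corollary \ref{c1}(4): because $Rg(B)\subseteq Rg(C)$, the canonical map $X/Rg(B)\to X/Rg(C)$ is surjective, whence $\mathrm{codim}\,Rg(C)\leq\mathrm{codim}\,Rg(B)<\infty$. This yields $C\in\Phi_-(X)$, the contrapositive of (2); no separate closedness step is required, since a range of finite codimension is automatically closed.

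Finally, part (3) is formal: by definition $\Phi_\pm(X)=\Phi_+(X)\cup\Phi_-(X)$, so $B\in\Phi_\pm(X)$ means $B\in\Phi_+(X)$ or $B\in\Phi_-(X)$, and in either case the corresponding implication from (1) or (2) places $C$ in $\Phi_+(X)\cup\Phi_-(X)=\Phi_\pm(X)$, i.e. $\lambda\notin\sigma_{e_\pm}(A(t))$. The only genuinely analytic ingredient is the closedness of $Rg(C)$ needed in part (1), and that burden is already discharged by Proposition \ref{p1}; I therefore expect Proposition \ref{p1} to be the load-bearing step, with the remainder reducing to the elementary bookkeeping of kernels and codimensions furnished by Corollary \ref{c1}.
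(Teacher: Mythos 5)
Your proposal is correct and follows essentially the same route as the paper: contraposition in each part, with the kernel inclusion $N(\lambda-A(t))\subseteq N[e^{\lambda s}-R(t,s)]$ and range inclusion $Rg[e^{\lambda s}-R(t,s)]\subseteq Rg(\lambda-A(t))$ from Corollary \ref{c1} handling the dimension and codimension bounds, Proposition \ref{p1} supplying closedness of $Rg(\lambda-A(t))$ in part (1), and part (3) following formally from the union $\Phi_\pm(X)=\Phi_+(X)\cup\Phi_-(X)$. Your added observation that no closedness step is needed in part (2) is consistent with the paper's definition of $\Phi_-(X)$, which requires only finite codimension.
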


	\begin{proof}
		\begin{enumerate}
			\item Suppose that $e^{\lambda s}  \notin  \sigma_{e_+}(R(t,s))$, then there exists $n \in \mathbb{N}$ such that
			$\alpha[e^{\lambda s} -R(t,s)]=n$ and
			$Rg[e^{\lambda s} -R(t,s)]$ is closed.\\
			By theorem \ref{t1}, we obtain $$N(\lambda -A(t) )\subset N[e^{\lambda s} -R(t,s)],$$
			then $$\alpha(\lambda-A(t)) \leq n.$$
			On the other hand, from Proposition \ref{p1}, we deduce that $Rg(\lambda -A(t))$ is closed.\\
			Therefore $\lambda- A(t) \in \Phi_+(\mathcal{D})$,
			Then$\lambda\notin \sigma_{e_+}(A(t)).$\\
			
			\item Suppose that $e^{\lambda s}  \notin  \sigma_{e_-}(R(t,s))$, then there exist $n\in\mathbb{N}$ such that
			$\beta[e^{\lambda s} -R(t,s)]=n $.\\
			By theorem \ref{t1}, we obtain $$ Rg[e^{\lambda s} - R(t,s)]\subseteq Rg(\lambda-A(t)),$$
			then $\beta(\lambda-A(t))\leq n$ and hence,  $\lambda\notin\sigma_{e_-}(A(t))$
			\item It is automatic by the previous assertions of this theorem.
		\end{enumerate}
	\end{proof}

	
	\begin{prop}\label{p2}Let $A(t)$ be the generator of a $C_0$-quasi-semigroup $(R(t,s))_{t,s\geq 0}$. For all $\lambda\in\mathbb{C}$ and all $t,s\geq 0$,we have
		\begin{enumerate}
			\item If $d[e^{\lambda s}-R(t,s)]=n,$ then  $d[\lambda-A(t)]\leq n.$
			\item If $a[e^{\lambda s}-R(t,s)]=n,$ then  $a[\lambda-A(t)]\leq n.$
		\end{enumerate}
	\end{prop}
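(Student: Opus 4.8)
The plan is to run the same machinery that drives Proposition \ref{p1}, but now tracking kernels and ranges of powers rather than closedness. Throughout I write $B=\lambda-A(t)$ and $S=e^{\lambda s}-R(t,s)$, and recall from Theorem \ref{t2}(3) the operator identity
\[
(\lambda-A(t))^nK_{\lambda,n}(t,s)+[D_{\lambda,n}(t,s)]^n[D_\lambda(t,s)]^n=s^{n^2},
\]
in which $D_{\lambda,n}(t,s)$ and $K_{\lambda,n}(t,s)$ are bounded and commute with $D_\lambda(t,s)$ and with $B$ (being polynomials in the bounded, mutually commuting operators $D_\lambda(t,s)$, $L_\lambda(t,s)$ and $BL_\lambda(t,s)=sI-e^{-\lambda s}D_\lambda(t,s)$). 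I will also use the two intertwining relations $B^n[D_\lambda(t,s)]^n=S^n$ on $X$ and $[D_\lambda(t,s)]^nB^n=S^n$ on $\mathcal D^n$ from Corollary \ref{c1}(1)--(2), together with the inclusions $N(B^n)\subseteq N(S^n)$ and $Rg(S^n)\subseteq Rg(B^n)$ from Corollary \ref{c1}(5)--(6). As in Proposition \ref{p1} the argument is carried out for $s\neq 0$ (so that the identity above is invertible after dividing by $s^{n^2}$), and the case $n=0$ is immediate from Corollary \ref{c1}(3)--(4), which transfer injectivity and surjectivity of $S$ to $B$.

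For the ascent statement the inclusion $N(B^n)\subseteq N(B^{n+1})$ is automatic, so I only treat the reverse. I take $x\in N(B^{n+1})$; by Corollary \ref{c1}(5) and the hypothesis $a(S)=n$ (i.e. $N(S^{n+1})=N(S^n)$) I get $x\in N(S^{n+1})=N(S^n)$, that is $S^nx=0$. I put $w=B^nx$; since $B^{n+1}x=0$ we have $Bw=0$, hence $w$ lies in the domain of every power of $B$ with $B^kw=0$ for all $k\ge 1$. Applying the identity of Theorem \ref{t2}(3) to $w$ gives $s^{n^2}w=B^nK_{\lambda,n}(t,s)w+[D_{\lambda,n}(t,s)]^n[D_\lambda(t,s)]^nw$. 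Here $[D_\lambda(t,s)]^nw=[D_\lambda(t,s)]^nB^nx=S^nx=0$ by Corollary \ref{c1}(2), while $K_{\lambda,n}(t,s)w\in N(B)$ (because $K_{\lambda,n}(t,s)$ commutes with $B$ and $Bw=0$), so $B^nK_{\lambda,n}(t,s)w=0$. Thus $s^{n^2}w=0$, and for $s\neq 0$ this forces $w=B^nx=0$, i.e. $x\in N(B^n)$; hence $a(B)\le n$.

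For the descent statement the inclusion $Rg(B^{n+1})\subseteq Rg(B^n)$ is automatic, so I take $y\in Rg(B^n)$, say $y=B^nu$, and aim at $y\in Rg(B^{n+1})$. Applying Theorem \ref{t2}(3) to $y$ gives $s^{n^2}y=B^nK_{\lambda,n}(t,s)y+[D_{\lambda,n}(t,s)]^n[D_\lambda(t,s)]^ny$. For the second term, $[D_\lambda(t,s)]^ny=[D_\lambda(t,s)]^nB^nu=S^nu$ by Corollary \ref{c1}(2), so the term equals $S^n[D_{\lambda,n}(t,s)]^nu\in Rg(S^n)=Rg(S^{n+1})$ by the hypothesis $d(S)=n$, and therefore lies in $Rg(B^{n+1})$ by Corollary \ref{c1}(6). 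For the first term, $K_{\lambda,n}(t,s)y=K_{\lambda,n}(t,s)B^nu=B^nK_{\lambda,n}(t,s)u\in Rg(B^n)\subseteq Rg(B)$ for $n\ge 1$; writing $K_{\lambda,n}(t,s)y=Bz$ I obtain $B^nK_{\lambda,n}(t,s)y=B^{n+1}z\in Rg(B^{n+1})$. Consequently $s^{n^2}y\in Rg(B^{n+1})$, and dividing by $s^{n^2}$ yields $y\in Rg(B^{n+1})$; hence $d(B)\le n$.

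The binomial bookkeeping is already packaged in Theorem \ref{t2}, so the only genuinely delicate point, exactly as in Proposition \ref{p1}, is the domain bookkeeping for the unbounded operator $B=\lambda-A(t)$: one must verify that the bounded operators $K_{\lambda,n}(t,s)$, $D_{\lambda,n}(t,s)$ and $L_\lambda(t,s)$ really commute with the closed operator $B$ on the relevant domains, and that each vector produced along the way (for instance $K_{\lambda,n}(t,s)w$ and $K_{\lambda,n}(t,s)u$) lands in the domain of the power of $B$ to which it is then applied. These checks are handled as in the proof of Proposition \ref{p1}, using that all the auxiliary operators are polynomials in the bounded, mutually commuting operators $D_\lambda(t,s)$ and $L_\lambda(t,s)$.
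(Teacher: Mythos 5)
Your proof is correct and takes essentially the same route as the paper's: both hinge on the identity $(\lambda-A(t))^nK_{\lambda,n}(t,s)+[D_{\lambda,n}(t,s)]^n[D_\lambda(t,s)]^n=s^{n^2}$ from Theorem \ref{t2}(3) combined with the kernel/range inclusions and intertwining relations of Corollary \ref{c1}, the only difference being that you apply the identity directly to $w=(\lambda-A(t))^nx$ (for ascent) and to $y$ (for descent), where the paper applies it to the preimage $x$ and then multiplies through by $(\lambda-A(t))^n$ --- the same cancellation either way. Your explicit treatment of the case $n=0$ and your restriction to $s\neq 0$ are in fact slightly more careful than the paper, which divides by $s^{n^2}$ without comment.
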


	\begin{proof}
		$\,$\\
		\begin{enumerate}
			\item Let $y\in Rg[\lambda-A(t)]^n$, then there exists $x\in \mathcal{D}^n$ (domain of $A(t)^n$) satisfying, $$(\lambda-A(t))^nx=y.$$
			Since $d[e^{\lambda s}-R(t,s)]=n,$ therefore
			$Rg[e^{\lambda s}-R(t,s)]^n=Rg[e^{\lambda s}-R(t,s)]^{n+1}.$
			Hence, there exists $z\in X$ such that
			$$ [e^{\lambda s}-R(t,s)]^nx=[e^{\lambda s}-R(t,s)]^{n+1}z.$$
			On the other hand, by theorem \ref{t2}, we have, $$(\lambda-A(t))^nK_{\lambda,n}(t,s)+[D_{\lambda,n}(t,s)]^n[D_\lambda(t,s)]^n=s^{n^2},$$
			Thus we have,
			\begin{eqnarray*}
				s^{n^2}y&=&(\lambda-A(t))^ns^{n^2}x\\
				&=& (\lambda-A(t))^n[(\lambda-A(t))^nK_{\lambda,n}(t,s)+[D_{\lambda,n}(t,s)]^n[D_\lambda(t,s)]^n]x\\
				&=&(\lambda-A(t))^n(\lambda-A(t))^nK_{\lambda,n}(t,s)x +[D_{\lambda,n}(t,s)]^n
				(\lambda-A(t))^n[D_\lambda(t,s)^n]x\\
				&=&(\lambda-A(t))^{2n}K_{\lambda,n}(t,s)x+[D_{\lambda,n}(t,s)]^n[e^{\lambda s}-R(t,s)]^nx\\
				&=&(\lambda-A(t))^{2n}K_{\lambda,n}(t,s)x+[D_{\lambda,n}(t,s)]^n[[e^{\lambda s}-R(t,s)]^{n+1}z]\\
				&=&(\lambda-A(t))^{2n}K_{\lambda,n}(t,s)x+[D_{\lambda,n}(t,s)]^n[(\lambda-A(t))^{n+1}
				[D_\lambda(t,s)]^{n+1}z]\\
				&=&(\lambda-A(t))^{n+1}[(\lambda-A(t))^{n-1}K_{\lambda,n}(t,s)x+
				[D_{\lambda,n}(t,s)]^n[D_\lambda(t,s)]^{n+1}z].
			\end{eqnarray*}
			Therefore, we conclude that $y\in Rg[\lambda-A(t)]^{n+1}$ and hence,  $$Rg[\lambda-A(t)]^n=Rg[\lambda-A(t)]^{n+1}.$$
			Finally, we conclude that $$d(\lambda-A(t))\leq n.$$
			\item Let $x\in N(\lambda-A(t))^{n+1}$ and we suppose that $a[e^{\lambda s}-R(t,s)]=n $, then we obtain
			$$N[e^{\lambda s}-R(t,s)]^n=
			N[e^{\lambda s}-R(t,s)]^{n+1}.$$
			From Corollary \ref{c1}, we have
			$$N(\lambda-A(t))^{n+1}\subseteq N[e^{\lambda s}-R(t,s)]^{n+1},$$
			hence $$x\in N[e^{\lambda s}-R(t,s)]^n.$$
			Thus we have,
			\begin{eqnarray*}
				s^{n^2}(\lambda-A(t))^nx&=&(\lambda-A(t))^n[(\lambda-A(t))^nK_{\lambda,n}(t,s)+[D_{\lambda,n}(t,s)]^n[D_\lambda(t,s)]^n]x\\
				&=&(\lambda-A(t))^{2n}K_{\lambda,n}(t,s)x+
				[D_{\lambda,n}(t,s)]^n(\lambda-A(t))^n[D_\lambda(t,s)]^nx\\
				&=&K_{\lambda,n}(t,s)(\lambda-A(t))^{n-1}(\lambda-A(t))^{n+1}x+[D_{\lambda,n}(t,s)]^n
				[e^{\lambda s}-R(t,s)]^nx\\
				&=&K_{\lambda,n}(t,s)(\lambda-A(t))^{n-1}(\lambda-A(t))^{n+1}x+[D_{\lambda,n}(t,s)]^n
				[e^{\lambda s}-R(t,s)]^nx\\
				&=& 0.
			\end{eqnarray*}
			Therefore, we obtain $x\in N(\lambda-A(t))^n$ and hence $$a(\lambda-A(t))\leq n.$$
		\end{enumerate}
	\end{proof}

\begin{coro} Let $A(t) $ be a closed  and densely defined  generator of  a $C_{0}-$quasi-semigroup  $(R(t,s))_{t,s\geq 0}$ on  a 	Banach  space $X$. For all $\lambda\in\mathbb{C}$ and all $t,s\geq 0$, we have
	\begin{enumerate}
		\item $e^{\sigma_{a}(A(t))s}\subseteq \sigma_{a}(R(t,s));$
		\item $e^{\sigma_{d}(A(t))s} \subseteq \sigma_{d}(R(t,s)).$
		\item $e^{\sigma_{D}(A(t))s} \subseteq \sigma_{D}(R(t,s)).$
	\end{enumerate}
\end{coro}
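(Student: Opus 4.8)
The plan is to prove all three inclusions by contraposition, exploiting the one-directional transfer of the relevant invariants from $e^{\lambda s}-R(t,s)$ down to $\lambda-A(t)$ already established in Proposition \ref{p2} (and in Proposition \ref{p1} for range-closedness in the Drazin case). In each part I fix $\lambda\in\mathbb{C}$ and $t,s\geq 0$ and prove the contrapositive statement: if $e^{\lambda s}\notin\sigma_\bullet(R(t,s))$ then $\lambda\notin\sigma_\bullet(A(t))$, which is exactly equivalent to $e^{\sigma_\bullet(A(t))s}\subseteq\sigma_\bullet(R(t,s))$, since the latter means precisely that $\lambda\in\sigma_\bullet(A(t))$ forces $e^{\lambda s}\in\sigma_\bullet(R(t,s))$.

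For (1), suppose $e^{\lambda s}\notin\sigma_a(R(t,s))$, that is $a[e^{\lambda s}-R(t,s)]=n<\infty$. By Proposition \ref{p2}(2) this forces $a[\lambda-A(t)]\leq n<\infty$, so $\lambda\notin\sigma_a(A(t))$. Part (2) is identical, with ascent replaced by descent and Proposition \ref{p2}(1) used in place of \ref{p2}(2): if $d[e^{\lambda s}-R(t,s)]=n<\infty$ then $d[\lambda-A(t)]\leq n<\infty$, whence $\lambda\notin\sigma_d(A(t))$.

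For (3), suppose $e^{\lambda s}\notin\sigma_D(R(t,s))$, i.e. $e^{\lambda s}-R(t,s)$ is Drazin invertible, so $a[e^{\lambda s}-R(t,s)]=d[e^{\lambda s}-R(t,s)]=p<\infty$ and $Rg[e^{\lambda s}-R(t,s)]^{p}$ is closed. Applying both parts of Proposition \ref{p2} gives $a[\lambda-A(t)]\leq p$ and $d[\lambda-A(t)]\leq p$, so both the ascent and the descent of $\lambda-A(t)$ are finite. I would then invoke the classical (purely algebraic, hence valid for the closed operator $A(t)$) fact that finiteness of both ascent and descent forces their equality, so that $a[\lambda-A(t)]=d[\lambda-A(t)]=q\leq p$. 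It remains to verify that $Rg[\lambda-A(t)]^{q}$ is closed. Proposition \ref{p1} yields that $Rg[\lambda-A(t)]^{p}$ is closed, and since $q=d[\lambda-A(t)]\leq p$, descent stabilization gives $Rg[\lambda-A(t)]^{q}=Rg[\lambda-A(t)]^{q+1}=\cdots=Rg[\lambda-A(t)]^{p}$, so $Rg[\lambda-A(t)]^{q}$ coincides with a closed set and is therefore closed. Hence $\lambda-A(t)$ is Drazin invertible and $\lambda\notin\sigma_D(A(t))$.

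Parts (1) and (2) present no difficulty. The main obstacle is the Drazin case, where one must reconcile the three exponents in play: the common value $p$ for $e^{\lambda s}-R(t,s)$, the possibly smaller common value $q$ for $\lambda-A(t)$, and the index at which range-closedness is transferred by Proposition \ref{p1}. The delicate point is that Proposition \ref{p1} only delivers closedness of $Rg[\lambda-A(t)]^{p}$, whereas Drazin invertibility of $\lambda-A(t)$ is formulated at its own stabilization index $q$; the reconciliation rests on range stabilization beyond the descent together with the inequality $q\leq p$. One should also confirm that the implication ``finite ascent and descent $\Rightarrow$ they are equal'' is genuinely available for the unbounded closed operator $A(t)$, since the standing hypotheses only guarantee that $A(t)$ is closed and densely defined; being algebraic in nature, this implication does transfer to the closed-operator setting.
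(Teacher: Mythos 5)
Your proof is correct and takes essentially the same route as the paper, whose entire proof of this corollary is the single line that it ``immediately comes from Propositions \ref{p2} and \ref{p1}''. Your write-up is in fact more careful than the original: the contrapositive reductions in (1) and (2) via Proposition \ref{p2}, and in the Drazin case the reconciliation of the two stabilization indices (equality of finite ascent and descent for the closed operator $\lambda-A(t)$, plus range stabilization $Rg[\lambda-A(t)]^{q}=Rg[\lambda-A(t)]^{p}$ to transfer closedness from Proposition \ref{p1}), are exactly the details the paper leaves implicit.
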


\begin{proof}
	Immediately comes from propositions \ref{p2} and  \ref{p1} .
\end{proof}


	The following theorem examines the semi-Browder spectrum.
	
	
	\begin{thm}Let $A(t) $ be a closed  and densely defined  generator of  a $C_{0}-$quasi-semigroup  $(R(t,s))_{t,s\geq 0}$ on  a 	Banach  space $X$. For all $\lambda\in\mathbb{C}$ and all $t,s\geq 0$, we have
		\begin{enumerate}
			\item $e^{\sigma_{{Br}_+}(A(t))s} \subseteq \sigma_{{Br}_+}(R(t,s));$
			\item $e^{\sigma_{{Br}_-}(A(t))s}\subseteq \sigma_{{Br}_-}(R(t,s));$
			\item $e^{\sigma_{{Br}_\pm}(A(t))s}\subseteq \sigma_{{Br}_\pm}(R(t,s)).$
		\end{enumerate}
	\end{thm}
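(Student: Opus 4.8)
The plan is to prove each of the three inclusions by passing to the contrapositive and combining the two ingredients already established: the semi-Fredholm spectral inclusions of Theorem \ref{t3} and the ascent/descent estimates of Proposition \ref{p2}. Recall that $e^{\lambda s}-R(t,s)$ is upper (resp. lower) semi-Browder precisely when it lies in $\Phi_+(X)$ with finite ascent (resp. in $\Phi_-(X)$ with finite descent), so each assertion splits into a Fredholm half and an ascent/descent half, and these two halves have been treated separately in the preceding results.

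For part (1), I would assume $e^{\lambda s}\notin\sigma_{Br_+}(R(t,s))$, i.e. $e^{\lambda s}-R(t,s)\in Br_+(X)$. This means simultaneously that $e^{\lambda s}-R(t,s)\in\Phi_+(X)$ and that $a[e^{\lambda s}-R(t,s)]<\infty$, say it equals $n$. From the first condition together with the contrapositive of Theorem \ref{t3}(1) one obtains $\lambda-A(t)\in\Phi_+(\mathcal{D})$; from the second together with Proposition \ref{p2}(2) one obtains $a[\lambda-A(t)]\leq n<\infty$. Hence $\lambda-A(t)\in Br_+(\mathcal{D})$, so $\lambda\notin\sigma_{Br_+}(A(t))$, which is exactly the desired inclusion $e^{\sigma_{Br_+}(A(t))s}\subseteq\sigma_{Br_+}(R(t,s))$.

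Part (2) is entirely parallel, with ``upper'' replaced by ``lower'', $\Phi_+$ by $\Phi_-$, and ascent by descent. Assuming $e^{\lambda s}-R(t,s)\in Br_-(X)$ gives both $e^{\lambda s}-R(t,s)\in\Phi_-(X)$ and $d[e^{\lambda s}-R(t,s)]=n<\infty$; Theorem \ref{t3}(2) then yields $\lambda-A(t)\in\Phi_-(\mathcal{D})$ and Proposition \ref{p2}(1) yields $d[\lambda-A(t)]\leq n<\infty$, so $\lambda-A(t)\in Br_-(\mathcal{D})$. For part (3) I would simply observe that $Br_\pm=Br_+\cup Br_-$ forces $\sigma_{Br_\pm}=\sigma_{Br_+}\cap\sigma_{Br_-}$ for both operators; thus if $\lambda\in\sigma_{Br_\pm}(A(t))$ then $\lambda$ lies in both $\sigma_{Br_+}(A(t))$ and $\sigma_{Br_-}(A(t))$, and applying parts (1) and (2) places $e^{\lambda s}$ in $\sigma_{Br_+}(R(t,s))\cap\sigma_{Br_-}(R(t,s))=\sigma_{Br_\pm}(R(t,s))$.

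Since both building blocks are already in hand, I do not expect a serious obstacle; the proof is essentially a bookkeeping synthesis of Theorem \ref{t3} and Proposition \ref{p2}. The only points demanding care are keeping the pairings straight---upper semi-Browder with ascent and $\Phi_+$, lower semi-Browder with descent and $\Phi_-$---and interpreting the semi-Fredholm and Browder classes for the closed, densely defined operator $\lambda-A(t)$ on its domain $\mathcal{D}$ rather than for a bounded operator, exactly as is done in Theorem \ref{t3}.
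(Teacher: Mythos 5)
Your proposal is correct and follows essentially the same route as the paper: argue by contrapositive, split the semi-Browder condition into its semi-Fredholm half and its ascent/descent half, and settle the two halves by the earlier results (the paper cites Corollary \ref{c1} with Propositions \ref{p1} and \ref{p2} directly, while you invoke Theorem \ref{t3}, which packages exactly those ingredients). Part (3) via $\sigma_{Br_\pm}=\sigma_{Br_+}\cap\sigma_{Br_-}$ likewise matches the paper's ``automatic'' step.
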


	\begin{proof}
		\begin{enumerate}
			\item Suppose that $e^{\lambda s}\notin\sigma_{Br_+}(R(t,s))$, then there exist $n,m\in\mathbb{N}$ such that
			$\alpha[e^{\lambda s}-R(t,s)]=m $,
			$Rg[e^{\lambda s}-R(t,s)]$ is closed and $a[e^{\lambda s} - R(t,s)]=n.$
			From Corollary \ref{c1} and Propositions \ref{p1} and \ref{p2}, we obtain
		\begin{center}
				$\alpha(\lambda-A(t))\leq m$, $Rg(\lambda-A(t))$ is closed and $a(\lambda-A(t))\leq n.$
		\end{center}
			Therefore $\lambda- A(t) \in \Phi_+(\mathcal{D})$ and $a(\lambda-A(t))< +\infty$ and hence, $\lambda\notin \sigma_{Br_+}(A).$
			
			\item Suppose that $e^{\lambda s} \notin\sigma_{Br_-}(R(t,s))$, then there exist $n,m\in\mathbb{N}$ such that
			$\beta[e^{\lambda s}-R(t,s)]=m$ and $d[e^{\lambda s}-R(t,s)]=n.$
			By corollary \ref{c1} and Proposition \ref{p2}, we obtain
			\begin{center}
				$\beta(\lambda-A(t))\leq m$ and $d(\lambda-A(t))\leq n.$
			\end{center}
			Therefore $\lambda-A(t) \in \Phi_-(\mathcal{D})$ and $d(\lambda-A(t))< +\infty$ and hence, $\lambda\notin\sigma_{Br_-}(A(t)).$
			\item It is automatic by the previous assertions of this theorem.
		\end{enumerate}
	\end{proof}

	\begin{prop}\label{p6} Let $A(t) $ be a closed  and densely defined  generator of  a $C_{0}-$quasi-semigroup  $(R(t,s))_{t,s\geq 0}$ on  a 	Banach  space $X$. For all $\lambda\in\mathbb{C}$ and all $t,s\geq 0$, we have
		\begin{enumerate}
			\item If $d_e[e^{\lambda s}-R(t,s)]=n,$ then  $d_e[\lambda - A(t)]\leq n;$
			\item If $a_e[e^{\lambda s}-R(t,s)]=n,$ then  $a_e[\lambda - A(t)]\leq n.$
		\end{enumerate}
	\end{prop}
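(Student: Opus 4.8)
The plan is to follow the proof of Proposition~\ref{p2} closely, replacing the exact equalities of ranges and kernels used there by the finite-dimensionality of the relevant quotients, and then to control the finite-dimensional error terms that this substitution produces. Write $B=e^{\lambda s}-R(t,s)$ and $A_\lambda=\lambda-A(t)$, and work with $s\neq 0$ and $n\geq 1$, so that the operators $D_{\lambda,n}(t,s),K_{\lambda,n}(t,s)\in\mathcal{B}(X)$ and the identity
$$A_\lambda^nK_{\lambda,n}(t,s)+[D_{\lambda,n}(t,s)]^n[D_\lambda(t,s)]^n=s^{n^2}$$
of Theorem~\ref{t2}(3) are available, together with their mutual commutativity and the relation $A_\lambda^n[D_\lambda(t,s)]^n=B^n$ from Corollary~\ref{c1}(1).

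For part (1) I would assume $d_e[B]=n$, i.e.\ $\dim\big(Rg(B^n)/Rg(B^{n+1})\big)<\infty$, and fix a finite-dimensional subspace $F\subseteq Rg(B^n)$ with $Rg(B^n)=Rg(B^{n+1})+F$. For $y=A_\lambda^nx\in Rg(A_\lambda^n)$, multiplying the identity above by $A_\lambda^n$ and using commutativity and $A_\lambda^n[D_\lambda(t,s)]^n=B^n$ gives
$$s^{n^2}y=A_\lambda^{2n}K_{\lambda,n}(t,s)x+[D_{\lambda,n}(t,s)]^nB^nx.$$
I would then write $B^nx=B^{n+1}z+f$ with $f\in F$ and substitute $B^{n+1}=A_\lambda^{n+1}[D_\lambda(t,s)]^{n+1}$; since $2n\geq n+1$, both $A_\lambda^{2n}K_{\lambda,n}(t,s)x$ and $[D_{\lambda,n}(t,s)]^nB^{n+1}z$ lie in $Rg(A_\lambda^{n+1})$, so that $s^{n^2}y\in Rg(A_\lambda^{n+1})+[D_{\lambda,n}(t,s)]^nF$. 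As $s\neq 0$ this yields $Rg(A_\lambda^n)\subseteq Rg(A_\lambda^{n+1})+[D_{\lambda,n}(t,s)]^nF$, hence $\dim\big(Rg(A_\lambda^n)/Rg(A_\lambda^{n+1})\big)\leq\dim F<\infty$ and $d_e[A_\lambda]\leq n$.

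For part (2) I would assume $a_e[B]=n$, i.e.\ $\dim\big(N(B^{n+1})/N(B^n)\big)<\infty$. Since $N(A_\lambda^{n+1})\subseteq N(B^{n+1})$ by Corollary~\ref{c1}(5), the rule $x\mapsto x+N(B^n)$ defines a linear map $\phi:N(A_\lambda^{n+1})\to N(B^{n+1})/N(B^n)$ whose kernel is $V:=N(A_\lambda^{n+1})\cap N(B^n)$. The decisive point is $V\subseteq N(A_\lambda^n)$: for $x\in V$ the same identity gives
$$s^{n^2}A_\lambda^nx=A_\lambda^{2n}K_{\lambda,n}(t,s)x+[D_{\lambda,n}(t,s)]^nB^nx=0,$$
because $A_\lambda^{2n}x=A_\lambda^{n-1}A_\lambda^{n+1}x=0$ and $B^nx=0$, so $A_\lambda^nx=0$. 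Hence $\phi$ induces an injection $N(A_\lambda^{n+1})/V\hookrightarrow N(B^{n+1})/N(B^n)$, and since $V\subseteq N(A_\lambda^n)\subseteq N(A_\lambda^{n+1})$ I conclude $\dim\big(N(A_\lambda^{n+1})/N(A_\lambda^n)\big)\leq\dim\big(N(B^{n+1})/N(B^n)\big)<\infty$, that is $a_e[A_\lambda]\leq n$.

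The algebraic manipulations here are routine and parallel those of Proposition~\ref{p2}; the step I expect to require the most care is precisely the passage from ``equality of ranges/kernels'' to ``finite codimension/dimension'' — namely verifying that $[D_{\lambda,n}(t,s)]^nF$ remains finite-dimensional in part (1), and that the map $\phi$ is well defined with kernel contained in $N(A_\lambda^n)$ in part (2).
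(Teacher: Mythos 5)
Your proof is correct, and while it runs on the same fuel as the paper's --- the identity $(\lambda-A(t))^nK_{\lambda,n}(t,s)+[D_{\lambda,n}(t,s)]^n[D_\lambda(t,s)]^n=s^{n^2}$ of Theorem \ref{t2}(3) together with $(\lambda-A(t))^n[D_\lambda(t,s)]^n=[e^{\lambda s}-R(t,s)]^n$ from Corollary \ref{c1} --- it packages the counting differently, in a way that actually repairs the paper's weak spots. Writing $B=e^{\lambda s}-R(t,s)$ and $A_\lambda=\lambda-A(t)$ as you do: the paper organizes both parts through quotient maps and the first isomorphism theorem, in (1) via $\phi\colon Rg(A_\lambda^n)\to Rg(B^n)/Rg(B^{n+1})$, $A_\lambda^nx\mapsto B^nx+Rg(B^{n+1})$, and in (2) via exactly your map (called $\psi$ there). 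Your substitution in (1) of a finite-dimensional complement $F$ with $Rg(B^n)=Rg(B^{n+1})+F$ and the direct inclusion $Rg(A_\lambda^n)\subseteq Rg(A_\lambda^{n+1})+[D_{\lambda,n}(t,s)]^nF$ is equivalent bookkeeping, but it supplies a step the paper leaves unjustified: the paper asserts $N(\phi)\subseteq Rg(B^{n+1})$, which does not follow from the definition of $\phi$; what is true (for $s\neq 0$, by the Proposition \ref{p2}-type computation you carry out) is $N(\phi)\subseteq Rg(A_\lambda^{n+1})$, which is all the dimension count needs. Likewise in (2), the inclusion the argument requires is $N(\psi)=N(A_\lambda^{n+1})\cap N(B^n)\subseteq N(A_\lambda^n)$; the paper ``justifies'' it only by the garbled chain $N(\psi)\subseteq N(B^n)\subseteq Rg(\lambda-A(t))^n$, whereas your ``decisive point'' $s^{n^2}A_\lambda^nx=A_\lambda^{2n}K_{\lambda,n}(t,s)x+[D_{\lambda,n}(t,s)]^nB^nx=0$ (using the commutation $A_\lambda^{2n}K_{\lambda,n}(t,s)x=K_{\lambda,n}(t,s)A_\lambda^{n-1}A_\lambda^{n+1}x$, the same step as in Proposition \ref{p2}(2)) is precisely the missing argument. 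Two minor notes: the case $n=0$ falls outside Theorem \ref{t2} but follows at once from Corollary \ref{c1}(4)--(5), so your restriction to $n\geq 1$ is harmless; and you correctly read the hypothesis $d_e[B]=n$ (resp.\ $a_e[B]=n$) as asserting only finiteness of the relevant quotient at level $n$, whereas the paper repeatedly conflates this with the quotient having dimension equal to $n$.
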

	
	\begin{proof}
		\begin{enumerate}
			\item Suppose that $d_e[e^{\lambda s}-R(t,s)]=n$, Since $Rg[e^{\lambda s}-R(t,s)]^n\subseteq Rg(\lambda-A(t))^n$
			we define the linear surjective application $\phi$ by
			\begin{eqnarray*}
				\phi: Rg(\lambda-A(t))^n &\rightarrow& Rg[e^{\lambda s}-R(t,s)]^n/ Rg[e^{\lambda s}-R(t,s)]^{n+1},\\
				\; y=(\lambda-A(t))^nx &\mapsto & [e^{\lambda s}-R(t,s)]^nx +Rg[e^{\lambda s}-R(t,s)]^{n+1}.
			\end{eqnarray*}
			Thus, by isomorphism Theorem, we obtain
			$$Rg(\lambda-A(t))^n/ N(\phi)\simeq Rg[e^{\lambda s}-R(t,s)]^n/ Rg[e^{\lambda s}-R(t,s)]^{n+1}.$$
			Therefore $$dim(Rg(\lambda-A(t))^n/ N(\phi))= d_e[e^{\lambda s}-R(t,s)]=n.$$
			And since
			$N(\phi) \subseteq  Rg[e^{\lambda s}-R(t,s)]^{n+1}\subseteq Rg(\lambda-A(t))^{n+1},$
			then $$Rg(\lambda-A(t))^n/ Rg(\lambda-A(t))^{n+1}\subseteq Rg(\lambda-A(t))^n/ N(\phi).$$
			Finally, we obtain
			$$d_e(\lambda-A(t))=dim(Rg(\lambda-A(t))^n/ R(\lambda-A(t))^{n+1} )\leq dim(Rg(\lambda-A(t))^n/ N(\phi))= n.$$
			\item Suppose that $$a_e[e^{\lambda s}-R(t,s)]=n.$$
		And since $N(\lambda-A)^{n+1}\subseteq N[e^{\lambda s}-R(t,s)]^{n+1},$
			we define the linear application $\psi$ by
			\begin{eqnarray*}
				\psi: N(\lambda-A(t))^{n+1} &\rightarrow& N[e^{\lambda s}-R(t,s)]^{n+1}/ N[e^{\lambda s}-R(t,s)]^n,\\
				x &\mapsto & x + N[e^{\lambda s}-R(t,s)]^n.
			\end{eqnarray*}
			Thus, by isomorphism Theorem, we obtain
			$$N(\lambda-A(t))^{n+1}/ N(\psi)\simeq Rg(\psi)\subseteq N[e^{\lambda s}-R(t,s)]^{n+1}/ N[e^{\lambda s}-R(t,s)]^n.$$
			Therefore $$dim (N(\lambda-A)^{n+1}/ N(\psi))\leq a_e[e^{\lambda s}-R(t,s)]=n.$$
			And since
			$N(\psi) \subseteq  N[e^{\lambda s}-R(t,s)]^n\subseteq Rg(\lambda-A(t))^n,$
			then  $$N(\lambda-A(t))^{n+1}/ N(\lambda-A(t))^n\subseteq N(\lambda-A(t))^{n+1}/ N(\psi).$$
			Finally, we obtain $$a_e(\lambda-A)=dim(N(\lambda-A(t))^{n+1}/ N(\lambda-A(t))^n)\leq dim (N(\lambda-A(t))^{n+1}/ N(\psi))\leq n .$$
		\end{enumerate}
	\end{proof}
	

	We will discuss in the following result the essential ascent and descent spectrum.
	

	\begin{thm} Let $A(t) $ be a closed  and densely defined  generator of  a $C_{0}-$quasi-semigroup  $(R(t,s))_{t,s\geq 0}$ on  a 	Banach  space $X$. For all $\lambda\in\mathbb{C}$ and all $t,s\geq 0$, we have
		\begin{enumerate}
			\item $e^{\sigma_{a_e}(A(t))s}\subseteq \sigma_{a_e}(R(t,s));$
			\item $e^{\sigma_{d_e}(A(t))s} \subseteq \sigma_{d_e}(R(t,s)).$
		\end{enumerate}
	\end{thm}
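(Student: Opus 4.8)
The plan is to prove each inclusion by contraposition, exactly as in the preceding theorems on the semi-Fredholm and semi-Browder spectra, thereby reducing everything to Proposition \ref{p6}. To unwind the notation, recall that a typical element of $e^{\sigma_{a_e}(A(t))s}$ is of the form $e^{\lambda s}$ with $\lambda\in\sigma_{a_e}(A(t))$; so to establish (1) it suffices to show that $e^{\lambda s}\notin\sigma_{a_e}(R(t,s))$ forces $\lambda\notin\sigma_{a_e}(A(t))$, and similarly for (2).

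For assertion (2), I would suppose $e^{\lambda s}\notin\sigma_{d_e}(R(t,s))$. By the definition of the essential descent spectrum this means $d_e[e^{\lambda s}-R(t,s)]<+\infty$, so there is a finite $n\in\mathbb{N}$ with $d_e[e^{\lambda s}-R(t,s)]=n$. Proposition \ref{p6}(1) then yields $d_e[\lambda-A(t)]\leq n<+\infty$, and hence $\lambda\notin\sigma_{d_e}(A(t))$. Taking the contrapositive gives $e^{\sigma_{d_e}(A(t))s}\subseteq\sigma_{d_e}(R(t,s))$.

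Assertion (1) is handled symmetrically: from $e^{\lambda s}\notin\sigma_{a_e}(R(t,s))$ one obtains a finite $n$ with $a_e[e^{\lambda s}-R(t,s)]=n$, and Proposition \ref{p6}(2) gives $a_e[\lambda-A(t)]\leq n<+\infty$, so $\lambda\notin\sigma_{a_e}(A(t))$, which is the desired inclusion after contraposition.

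I do not expect any genuine obstacle here, since all the analytic work has already been carried out in Proposition \ref{p6}, namely the operator identities of Theorem \ref{t2}, the kernel and range inclusions of Corollary \ref{c1}, and the isomorphism-theorem dimension count yielding the quotient inequalities. The only point requiring care is the passage between the finite ascent/descent phrasing and the complementary spectral membership: one must read $e^{\lambda s}\notin\sigma_{a_e}(R(t,s))$ as asserting the existence of \emph{some} finite index $n$, not a prescribed one, and then invoke the $\leq n$ bound from Proposition \ref{p6} to conclude finiteness on the generator side. This mirrors exactly the corollary deduced earlier from Propositions \ref{p2} and \ref{p1}.
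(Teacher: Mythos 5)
Your proposal is correct and follows exactly the paper's own argument: the paper likewise proves both inclusions by contraposition, extracting a finite index $n$ from $e^{\lambda s}\notin\sigma_{a_e}(R(t,s))$ (resp. $\sigma_{d_e}$) and applying Proposition \ref{p6} to get $a_e[\lambda-A(t)]\leq n$ (resp. $d_e[\lambda-A(t)]\leq n$), hence $\lambda$ outside the corresponding spectrum of $A(t)$. Your remark about reading the hypothesis as the existence of \emph{some} finite index, rather than a prescribed one, is the only subtlety, and you handle it as the paper does.
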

	
	\begin{proof}
		\begin{enumerate}
			\item Suppose that , $ e^{\lambda s} \notin \sigma_{a_e}(R(t,s)).$
			Then there exists $n\in\mathbb{N}$ satisfying $$a_e[e^{\lambda s}-R(t,s)]=n.$$
			Therefore, by Proposition \ref{p6}, we obtain
			$a_e[\lambda-A(t)]\leq n$ and hence $$\lambda\notin\sigma_{a_e}(A(t)).$$
			\item Suppose that $$e^{\lambda s}\notin \sigma_{d_e}(R(t,s)).$$
			Then there exists $n\in\mathbb{N}$ satisfying $$d_e[e^{\lambda s}-R(t,s)]=n.$$
			Therefore, by Proposition \ref{p6}, we obtain
			$d_e[\lambda-A(t)]\leq n$ and hence $\lambda\notin\sigma_{d_e}(A(t)).$
		\end{enumerate}
	\end{proof}

	{
		}
	
\end{document}